\documentclass[11pt,letterpaper]{amsart}
\usepackage[T1]{fontenc}
\usepackage{lmodern}
\usepackage{amsmath,amssymb,amsthm}
\usepackage{microtype}
\usepackage[hidelinks]{hyperref}
\hypersetup{pdftitle={Deformation invariance of canonical nefness in smooth Kahler morphisms},
 pdfauthor={Mu-Lin Li, Xiao-Lei Liu and Sheng Rao}}
\numberwithin{equation}{section}
\allowdisplaybreaks[1]
\newtheorem{theorem}{Theorem}[section]
\newtheorem{proposition}[theorem]{Proposition}
\newtheorem{lemma}[theorem]{Lemma}
\newtheorem{corollary}[theorem]{Corollary}
\theoremstyle{definition}
\newtheorem{definition}[theorem]{Definition}
\theoremstyle{remark}
\newtheorem{remark}[theorem]{Remark}

\newcommand{\R}{\mathbb R}
\newcommand{\C}{\mathbb C}
\newcommand{\Z}{\mathbb Z}
\newcommand{\Q}{\mathbb Q}
\newcommand{\Pp}{\mathbb P}
\newcommand{\OO}{\mathcal O}
\newcommand{\KK}{\mathcal K}
\newcommand{\BC}{\mathrm{BC}}
\newcommand{\ddc}{dd^{c}}
\newcommand{\PD}{\operatorname{PD}}
\newcommand{\Hom}{\operatorname{Hom}}
\newcommand{\Aut}{\operatorname{Aut}}
\newcommand{\PGL}{\operatorname{PGL}}
\newcommand{\Locus}{\operatorname{Locus}}
\newcommand{\Nef}{\operatorname{Nef}}
\newcommand{\red}{\mathrm{red}}
\newcommand{\rk}{\operatorname{rank}}

\newcommand{\Mspace}{\mathcal P}
\newcommand{\Cyc}{\mathcal B}
\newcommand{\arxiv}[1]{\href{https://arxiv.org/abs/#1}{arXiv:#1}}

\newcommand{\X}{\mathcal X}
\newcommand{\NefLocus}{\mathcal N}

\newcommand{\ddbar}{\partial\bar\partial}
\newcommand{\source}[2][]{\if\relax\detokenize{#1}\relax[#2]\else[#2, #1]\fi}
\numberwithin{equation}{section}

\title[Deformation invariance of canonical nefness]{\bfseries Deformation invariance of canonical nefness
 in smooth K\"ahler morphisms}
\author{Mu-Lin Li~~~~ Xiao-Lei Liu ~~~~Sheng Rao}

\thanks{The first author is supported by NSFC (No. 12271073 and 12271412), the second author is supported by NSFC (No. 12271073) and the third author is supported by NSFC ( No. 12271412, W2441003, 12671107) and Hubei Provincial
Innovation Research Group Project (No. 2025AFA044).}
\thanks{Keywords: deformation,  smooth family, K\"ahler morphism, minimal model}
\thanks{MSC(2010): 14D15, 14E30, 14J10}

\date{}

\begin{document}
\maketitle
\vspace{-1.2em}
\begin{abstract}
Let $f\colon X\to\Delta$ be a smooth K\"ahler morphism from complex manifold $X$ to the unit disc. We prove that the canonical
bundle $K_{X_t}$ is nef for \emph{every} fiber as soon as it is nef
for \emph{one} fiber.
This answers, in arbitrary dimension and in the K\"ahler setting, the
deformation-openness problem for non-nefness of the canonical bundle
raised by Campana and Peternell.
\end{abstract}

\section{Introduction}\label{sec:introduction}

Let $\Delta=\{t\in\C:|t|<1\}$, and let $f\colon X\to\Delta$ be a
smooth proper holomorphic map of complex manifolds; write
$X_t=f^{-1}(t)$ for the fiber. We study a stability question for the
canonical bundle: \emph{is the nefness of $K_{X_t}$ constant along the
family?} In particular, if the canonical bundle of one fiber is nef,
must the canonical bundle of every fiber be nef? The reverse direction
--- that non-nefness persists --- is what one needs to run a Mori
program fibrewise over the disc, and it is this direction that turns
out to be difficult.

The question goes back to Campana and Peternell, who isolate it as
``the first basic question for a Mori theory in the K\"ahler case''
\cite[\S3.13]{CP99}. They  formulate the question as the following openness property.

\medskip
\noindent\emph{Problem} (Campana--Peternell \cite[Problem~3.14]{CP99}).
\emph{Let $X\to\Delta$ be a family of compact K\"ahler manifolds.
Assume that $K_{X_0}$ is not nef. Is then $K_{X_t}$ not nef for all
(small) $t$?}

It is noted that the deformation openness of  non-nefness in the analytic case is really interesting and much
more delicate in \cite{AP97}.


\begin{definition}[K\"ahler morphism]\label{def:kahler-morphism}
	A smooth proper morphism $f\colon\X\to\Delta$ is a \emph{K\"ahler morphism} if there are an open covering $\{U_i\}$ of $\X$ and smooth real-valued functions $\varphi_i$ on $U_i$ such that $\varphi_i-\varphi_j$ is the real part of a holomorphic function on $U_i\cap U_j$ and $\sqrt{-1}\,\ddbar\varphi_i$ is positive on $T_{\X/\Delta}|_{U_i}$.
\end{definition}
The local forms glue to a closed real $(1,1)$-form whose restriction to
each fiber is K\"ahler. We establish the stronger conclusion that canonical nefness is constant for K\"ahler morphism, with no restriction on dimension.
\begin{theorem}\label{thm:main-2}
	Let $f\colon\X\to\Delta$ be a smooth K\"ahler morphism. If $K_{X_{t_*}}$ is nef for some $t_*\in\Delta$, then $K_{X_t}$ is nef for every $t\in\Delta$. Equivalently,
	\begin{equation}\label{eq:nef-locus-2}
		\NefLocus(f):=\{t\in\Delta:K_{X_t}\text{ is nef}\}\quad\text{is either }\varnothing\text{ or }\Delta.
	\end{equation}
\end{theorem}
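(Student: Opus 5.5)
Since $\Delta$ is connected, I will prove that $\NefLocus(f)$ is both closed and open in $\Delta$.

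\emph{Closedness.} Take a smooth closed real $(1,1)$-form $\omega$ on $\X$ that restricts to a K\"ahler form $\omega_t$ on each $X_t$; the local potentials $\varphi_i$ of Definition~\ref{def:kahler-morphism} give one. Fix a smooth hermitian metric $h$ on the relative canonical bundle $K_{\X/\Delta}$; its curvature $\Theta$ restricts to a representative $\Theta_t$ of $c_1(K_{X_t})$. On a compact K\"ahler manifold, $K_{X_t}$ is nef if and only if for every $\varepsilon>0$ there is a smooth function $\psi$ with $\Theta_t+\sqrt{-1}\ddbar\psi\ge-\varepsilon\omega_t$. Equivalently, the class $c_1(K_{X_t})+\varepsilon[\omega_t]$ is K\"ahler for all $\varepsilon>0$. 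Now let $t_n\to t_0$ with $t_n\in\NefLocus(f)$. Since $X\to\Delta$ is smooth, Ehresmann trivializes it near $t_0$, so the classes vary continuously in $H^{1,1}$. The K\"ahler cone is locally stable under small deformations (Kodaira--Spencer). This alone does not give closedness directly, because nef is a limit notion. Instead I would use Demailly--P\u{a}un: a class is nef if and only if it pairs nonnegatively against every closed positive current, and in particular $\int_Z(\cdot)^p\wedge\omega^{\dim Z-p}\ge0$ on every analytic subvariety $Z$. The required condition is that $\{c_1(K)+\varepsilon\omega\}$ be positive on all subvarieties of $X_{t_0}$. Any subvariety of $X_{t_0}$ is a limit of cycles in nearby fibers, but the converse direction is what matters. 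The correct route is to use the relative Barlet cycle space $\Cyc(\X/\Delta)$, whose components are proper over $\Delta$ because $\omega$-volume is locally constant in homology and the morphism is K\"ahler (Bishop). Properness of these components lets limits of cycles exist, which controls degenerations.

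\emph{Openness, the main obstacle.} Here I show that non-nefness is closed. Suppose $K_{X_{t_n}}$ is not nef for $t_n\to t_0$. Since $X_{t_n}$ is compact K\"ahler, the cone and contraction theorems are not available in general. So I would use the Demailly--P\u{a}un numerical characterization together with the fact that failure of nefness is detected by a subvariety. Concretely, if $c_1(K)+\varepsilon\omega$ is not K\"ahler for small $\varepsilon$, then there is an irreducible $Z_n\subset X_{t_n}$ with $\int_{Z_n}(K+\varepsilon\omega)^{\dim Z_n}\le0$, or with negative intersection against $(K+\varepsilon\omega)^{k}\wedge\omega^{\dim Z_n-k}$. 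For $K$ I expect to exploit the canonical bundle through the theory of rational curves: a non-nef canonical class on a compact K\"ahler manifold should be detected by a rational curve $C_n$ with $0<-K\cdot C_n\le 2\dim X_t$. This is known by H\"oring--Peternell in dimension three; in general it follows from the MMP for K\"ahler manifolds together with bend-and-break via deformation theory of curves, and I would invoke it as the key input. The bound $-K\cdot C_n\le n+1$ bounds the $\omega$-degree of $C_n$ only after a fixed choice, so I also need uniform boundedness of $\omega\cdot C_n$. I would get this from Mori's bend-and-break relative to the family, deforming in $\Hom(\Pp^1,\X/\Delta)$. The dimension estimate $\dim_{[C]}\Hom\ge -K_{\X}\cdot C+\dim\X$ shows that these rational curves deform into the whole family; since $K_{\X}\cdot C=K_{X_t}\cdot C$, they move to neighbouring fibers.

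\emph{Conclusion.} Bend-and-break then produces curves of bounded degree lying in a single component of $\Cyc(\X/\Delta)$. This component is proper over $\Delta$ by Bishop's theorem, so it dominates a neighbourhood, and its limit at $t_0$ is an effective cycle $C_0$ with $K_{X_{t_0}}\cdot C_0<0$. Hence $K_{X_{t_0}}$ is not nef. This shows that the non-nef locus is open, that is, $\NefLocus(f)$ is closed. The deformation of $C_n$ out of its fiber similarly shows that the non-nef locus is closed, that is, $\NefLocus(f)$ is open. Combining both with the connectedness of $\Delta$ gives \eqref{eq:nef-locus-2}. The hard step is producing these rational curves in the non-projective K\"ahler setting.
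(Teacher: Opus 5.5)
\paragraph{The gap: no upper bound on the family inside the fiber.}
Your decisive step is the claim that $\dim_{[u]}\Hom(\Pp^1,\X)\ge\dim\X-K_{\X}\cdot C$ makes the rational curves deform to neighbouring fibers. This does not follow. A lower bound for the map germ into the total space forces the curve out of its fiber only if you also have an \emph{upper} bound for the dimension of the family of such maps that stay inside the fiber, and that upper bound must be strictly smaller. Without it, the dimension count cannot exclude that the whole germ lies in $X_{t_0}$.

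For instance, suppose a fourfold fiber $X_{t_0}$ contains $E\simeq\Pp^2$ with normal bundle $\OO(-1)^{\oplus2}$. A conic in $E$ has $-K_{X_{t_0}}$-degree $2$, so your lower bound in the fivefold $\X$ is $5+2=7$. But degree-two maps $\Pp^1\to E$ already form an $8$-dimensional family inside the fiber.

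\paragraph{How the paper supplies the upper bound.}
This is the heart of its proof:
\begin{itemize}
\item It uses $g$-contracted rational curves of \emph{minimal} anticanonical degree $\ell$. Minimality rules out splitting and multiple covers, and together with Gromov compactness it gives the compact moduli space $\Mspace$.
\item It proves bend-and-break inside the projective fibers of the Hacon--Xie contraction $g\colon M\to Y$, which gives $r+2\le 2d-e$.
\item It shows $2d-e\le n$ by semismallness, deduced from hard Lefschetz for the nef class $\alpha_0=c_1(K_M)+\tau[\omega_0]=g^*\beta$. Hard Lefschetz holds here only because the same flat class is K\"ahler on the nef fiber $X_{t_*}$.
\end{itemize}
Set against the total-space bound $r\ge n+\ell-2$, which is valid after the Barlet confinement, this gives $n+\ell\le n$. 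The rational curve you invoke from a ``K\"ahler MMP'' is obtained in the paper from Hacon--Xie plus Fujino's cone theorem for the projective morphism $g$.

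\paragraph{Why the open/closed structure does not work as set up.}
The nef fiber enters the paper's argument globally and twice: for hard Lefschetz on $M$, and for confining the curve germs to $M$ through the relative Barlet space. The paper therefore argues directly by contradiction between an arbitrary non-nef fiber $M=X_0$ and the given nef fiber $X_{t_*}$. Your purely local open/closed decomposition has no mechanism for bringing that fiber in.

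The decomposition also leaves further steps unproved:
\begin{itemize}
\item Your closedness paragraph contains no actual argument.
\item The limiting argument in your conclusion needs a uniform bound on $\int_{C_n}\omega$. You attribute this to ``relative bend-and-break'' without establishing it.
\item Your labels are swapped. Taking limits of negative cycles shows that the non-nef locus is closed; moving curves out of a fiber shows that it is open.
\end{itemize}
Your use of properness for components of $\Cyc_1(\X/\Delta)$, with image a point or all of $\Delta$, is the right tool and matches the paper. But it only helps once you know the negative curve is not confined to its fiber, and that is exactly the missing upper bound.
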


Wi\'sniewski \source{17, 18} established deformation invariance of nef values and prove this openness property in the projective setting. For smooth projective families over a complex base, Andreatta--Peternell \source{1} proved persistence of canonical non-nefness when the special fiber has an extremal contraction that is not small; in particular, their result applies in dimensions at most four. Li--Rao--Wang \source{14} proved deformation invariance of nefness for adjoint canonical line bundles in smooth projective families and treated canonical nefness in smooth K\"ahler families of threefolds. Theorem~\ref{thm:main-2} addresses the nonprojective K\"ahler case in arbitrary dimension, without hypotheses on extremal contractions.

All of these results are either projective or bounded in dimension (or
both), and the treatment of $K_X$ itself in \cite{AP97} is further
subject to the small-contraction.

Our contribution removes every one of these restrictions at once:
we prove the result for smooth K\"ahler families of \emph{arbitrary}
relative dimension, with no hypothesis on the type of extremal contraction. We in fact establish the a priori
stronger \emph{constancy} statement, whereas Campana--Peternell only
ask that non-nefness survive small deformations. To our knowledge this
is the first result in this direction that is simultaneously valid in
the K\"ahler category, in every dimension, and without any proviso on
the extremal contractions.

 From Fujiki
\source[Lemma 4.4(2)]{6}, the inverse image $f^{-1}(G)$ is K\"ahler for every relatively compact subdomain
$G\Subset\Delta$. So Theorem~\ref{thm:main-2} reduces  to  the following special case.
\begin{theorem}\label{thm:main}
	Let $f\colon X\to\Delta$ be a smooth proper surjective holomorphic map from a K\"ahler manifold to the unit disc. If $K_{X_{t_*}}$ is nef for some $t_*\in\Delta$, then $K_{X_t}$ is nef for every $t\in\Delta$. Equivalently,
	\begin{equation}\label{eq:nef-locus}
		\NefLocus(f):=\{t\in\Delta:K_{X_t}\text{ is nef}\}\quad\text{is either }\varnothing\text{ or }\Delta.
	\end{equation}
\end{theorem}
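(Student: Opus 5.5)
Since $\Delta$ is connected, it suffices to show that $\NefLocus(f)$ is both closed and open in $\Delta$.

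\textbf{Closedness.} I would argue cohomologically. The smooth family is $C^\infty$-trivial, so $H^2(X_t,\R)$ is identified with $H^2(X_0,\R)$ after shrinking around a given point. Under this identification $c_1(K_{X_t})$ is the restriction of $c_1(K_{X/\Delta})$, hence a locally constant class. Since $X$ is K\"ahler, fix a K\"ahler form $\omega$ on $X$. By Demailly's criterion on compact K\"ahler manifolds, $K_{X_t}$ is nef iff for every $\varepsilon>0$ the class $c_1(K_{X_t})+\varepsilon[\omega|_{X_t}]$ contains a smooth positive form. Suppose $t_k\to t_0$ with all $K_{X_{t_k}}$ nef. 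I would use the Demailly--Paun characterization: for compact K\"ahler $Y$, a real $(1,1)$-class $\alpha$ is nef iff $\int_Z(\alpha+\varepsilon\omega)^p\wedge\omega^{\dim Z-p}>0$ for all analytic subvarieties $Z$, all $p$, and all $\varepsilon>0$. For closedness, the cleanest tool is the Barlet cycle space: any irreducible subvariety $Z\subset X_{t_0}$ is a limit of cycles $Z_k\subset X_{t_k}$? That fails in general, because subvarieties need not deform. So instead I would show closedness via currents. Nefness of $K_{X_{t_k}}$ gives smooth forms $\theta_k\in c_1(K_{X_{t_k}})$ with $\theta_k\ge-\varepsilon\omega$. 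Transporting them with a fixed smooth trivialization and a fixed smooth representative $\rho$ of $c_1(K_{X/\Delta})$, write $\theta_k=\rho|_{X_{t_k}}+\ddc\psi_k$. Solving a Monge--Amp\`ere equation (Yau/Aubin) on each fiber with uniform constants in the parameter gives uniformly controlled potentials. Passing to the limit produces a closed positive current in $c_1(K_{X_{t_0}})+\varepsilon[\omega]$ with zero Lelong numbers, and hence nefness by Demailly regularization. The hardest part of closedness is uniform $C^0$ control, which I would obtain from Kolodziej-type estimates that hold uniformly in smooth families.

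\textbf{Openness, the main obstacle.} Suppose $K_{X_{t_0}}$ is nef but $K_{X_{t_k}}$ is not nef for a sequence $t_k\to t_0$. On a compact K\"ahler manifold with $K$ not nef, Mori theory in the K\"ahler setting (H\"oring--Peternell; Cao--H\"oring for the cone theorem) is available only in low dimension. I would instead use a cone-type statement that can be proved via Demailly--Paun plus bend-and-break in the K\"ahler setting. This gives a rational curve $C_k\subset X_{t_k}$ with $0<-K_{X_{t_k}}\cdot C_k\le \dim X_{t_k}+1$. The goal is then to bound $\omega\cdot C_k$ uniformly in $k$. Granting that bound, the Barlet cycle space of $X$ is proper over $\Delta$ on bounded-volume components (Bishop), so the $C_k$ converge to a cycle $C_0\subset X_{t_0}$ with $K\cdot C_0=\lim K\cdot C_k<0$. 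This contradicts nefness of $K_{X_{t_0}}$.

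The uniform volume bound is the genuinely hard step. I would try to get it by choosing the $C_k$ as minimal rational curves in a $K$-negative extremal family and using the bound on deformation dimension together with Demailly--Paun. The idea is to show that $K_{X_{t_k}}+\lambda_k[\omega]$ becomes nef for a nef value $\lambda_k$ that stays bounded, as in Wi\'sniewski's nef value argument. The curves computing $\lambda_k$ then satisfy $\omega\cdot C_k=-K\cdot C_k/\lambda_k$, so it suffices to keep $\lambda_k$ bounded below away from $0$. Here is where I expect difficulty. Nefness at $t_0$ forces $\lambda_k\to0$, so I would instead need a rationality or discreteness statement for nef thresholds in the family. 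That would have to come from the cone theorem with uniformly bounded denominators, exploiting the constancy of the integral class $c_1(K)$.
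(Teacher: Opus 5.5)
There is a genuine gap, and it lies in the half you treat as routine. Your ``closedness'' step (nef at $t_k\to t_0$ implies nef at $t_0$) is exactly the Campana--Peternell question, namely persistence of non-nefness, and that is the hard direction. Your argument for it uses nothing specific to $K$. Transported potentials, uniform Kolodziej-type $C^0$ bounds and a weak limit with zero Lelong numbers would equally show that the nef locus of \emph{any} line bundle on $X$ is closed. That is false. Blow up $\Pp^2\times\Delta$ along the three disjoint sections $[1:0:0]$, $[0:1:0]$, $[1:1:t]$. Then $2H-E_1-E_2-E_3$ is base-point-free on $X_t$ for $t\neq0$, but has degree $-1$ on the strict transform of the line $\{z=0\}$ in $X_0$.

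The step that breaks is the uniform $C^0$ estimate. Kolodziej's bound needs a semipositive reference form; for a general representative it only controls the potential relative to the envelope. Here $\rho|_{X_{t_k}}+\varepsilon\omega$ is merely cohomologous to a K\"ahler form, and potentials $h_k$ with $\rho|_{X_{t_k}}+\varepsilon\omega+dd^c h_k>0$ need not stay uniformly bounded. Since Lelong numbers can only jump up under weak limits, the limit current may have positive Lelong numbers. An Aubin--Yau type equation does not rescue this: the maximum principle bounds the solution from above, but the lower bound again requires a positive reference form.

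Your ``openness'' step is incomplete, as you acknowledge. A cone theorem producing rational curves on non-projective compact K\"ahler manifolds of arbitrary dimension is not available, and Demailly--Paun plus bend-and-break does not produce rational curves there. You also have no uniform $\omega$-area bound for the curves $C_k$.

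The paper needs no limits and no open/closed split. It fixes one fiber $M=X_0$ with $K_M$ not nef, together with the nef fiber $X_{t_*}$, and argues directly. The ideas you are missing are these.
First, the threshold class $\alpha_0=c_1(K_M)+\tau[\omega_0]$, nef but not K\"ahler on $M$, is the restriction of the flat class $c_1(K_{X/\Delta})+\tau[\omega]$. That class is K\"ahler on $X_{t_*}$, so hard Lefschetz holds for $\alpha_0$. For the Hacon--Xie contraction $g$ with $\alpha_0=g^*\beta$, this forces $2\dim Z-\dim g(Z)\le n$.
Second, rational curves come from Fujino's cone theorem for the \emph{projective morphism} $g$. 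Their areas are automatically controlled, since $-K_M\cdot C=\tau\int_C\omega_0$ on $g$-contracted curves.
Third, $\omega$-area and $K_{X/\Delta}$-degree are constant on components of the relative Barlet space. Hence a component containing a $K$-negative cycle is proper over $\Delta$ with image a single point, and cannot reach $X_{t_*}$.

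Consequently the deformations in $X$ of a minimal contracted rational curve, of dimension at least $n+1+\ell$ by adjunction, stay in $M$. Quotienting by $\PGL_2(\C)$ gives a family of dimension $r\ge n+\ell-2$. Bend-and-break in the projective fibers of $g$ then gives $n+\ell\le r+2\le 2d-e\le n$, a contradiction. This is where the specific features of $K$ and the single nef fiber enter, and your approach has no substitute for them.
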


\medskip
\noindent\textbf{Ideas of the proof of Theorem~\ref{thm:main}.}
The main idea is to compare a Lefschetz bound for a nef-threshold
contraction with the deformations of its minimal rational curves.
Building on the nef-value strategy of Wi\'sniewski \cite{Wi91,Wi98},
we carry out this comparison without a relative polarization and using
nefness on only one fiber. The main difficulties, and the constructions
that overcome them, are the following.

\medskip
\noindent\textbf{(1) A Lefschetz constraint for a transcendental boundary class.}
Suppose that $M=X_0$ has non-nef canonical bundle while
$K_{X_{t_*}}$ is nef, and put $n=\dim M$.
A K\"ahler form $\omega$ on $X$ gives a positive threshold $\tau$
for which $\alpha_0=c_1(K_M)+\tau[\omega_0]$ is nef but not
K\"ahler, where $\omega_0=\omega|_M$. The difficulty is that hard
Lefschetz need not hold for a nef boundary class. We overcome this by
observing that the \emph{same flat cohomology class} is K\"ahler on
$X_{t_*}$. The classical theorem \cite{GH78} on that fiber therefore
gives the Lefschetz isomorphisms on $M$; no limiting argument is involved.

The transcendental base-point-free theorem of Hacon--Xie
\cite[Theorem~1.4]{HX26} realizes $\alpha_0$ as $g^*\beta$ in Bott--Chern cohomology for a
nontrivial projective contraction $g\colon M\to Y$ and a K\"ahler
class $\beta$ on $Y$. Combining the Lefschetz isomorphisms with a
rank-vanishing argument for the pullback of a K\"ahler form
representing $\beta$ gives the semismallness bound
$2\dim Z-\dim g(Z)\le n$ for every irreducible compact analytic
$Z\subset M$, even when $Y$ is singular
(Section~\ref{sec:threshold}).

\medskip
\noindent\textbf{(2) A compact analytic family of minimal rational curves.}
To apply bend-and-break, we need a compact parameter space with a
universal family of normalized rational curves, not merely a compact
space of image cycles. Barlet compactness alone does not control the
quotient by the noncompact reparametrization group.

For $g$-contracted rational curves of minimal anticanonical degree
$\ell>0$, the identity $\tau\int_C\omega_0=\ell$ fixes their energy.
Gromov compactness \cite{MS12}, together with minimality, excludes
bubbling and multiple covers. We also establish properness of the free
$\PGL_2(\C)$-action, obtaining a compact analytic moduli space
$\Mspace$ with a universal $\Pp^1$-family
(Appendix~\ref{app:moduli}). Neither $M$ nor $\Mspace$ is assumed
projective. Instead, we prove that the fibers of $\Mspace\to Y$ are
projective, which suffices to localize bend-and-break to the contraction
fibers (Section~\ref{sec:contraction}).

\medskip
\noindent\textbf{(3) Confinement from one nef fiber and the extra dimension.}
The remaining difficulty is to retain the deformation estimate in the
total space $X$ while forcing the curves to remain in $M$. Merely
observing that a negative curve cannot reach $X_{t_*}$ is insufficient:
we must confine its \emph{entire reduced local map germ}.

We use the relative Barlet space. Constant K\"ahler area makes each
irreducible component proper over $\Delta$, with image either a point
or all of $\Delta$. Constancy of the $K_{X/\Delta}$-degree and nefness
on $X_{t_*}$ exclude the second possibility for negative components.
Passing from maps to their image cycles therefore confines the entire
reduced local map germ of a minimal contracted curve to $M$
(Section~\ref{sec:barlet}).

Crucially, we estimate the local dimension of maps into $X$
\emph{before} invoking confinement. The extra dimension of $X$ is
retained after confinement and quotienting by reparametrizations,
giving a component $V\subset\Mspace$ with
$r=\dim V\ge n+\ell-2$ (Section~\ref{sec:deformation}).
For $Z=\Locus(V)$, $d=\dim Z$ and $e=\dim g(Z)$, bend-and-break
and the Lefschetz constraint now give
\[
 n+\ell\le r+2\le 2d-e\le n,
\]
contradicting $\ell>0$. This argument requires neither deforming the
contraction nor classifying its exceptional locus.

\medskip
\noindent\emph{Organization.}
Section~\ref{sec:preliminaries} fixes notation for Bott--Chern
cohomology and for the relative Barlet space, and
develops the confinement of negative curves, the key new tool.
Section~\ref{sec:threshold} constructs the nef threshold class and
records its hard Lefschetz property. Section~\ref{sec:contraction}
produces the projective contraction and the moduli space of minimal
rational curves (constructed in Appendix~\ref{app:moduli}).
Section~\ref{sec:deformation} carries out the deformation theory, and
Section~\ref{sec:proof} assembles the final contradiction.

{\bf Acknowledgement:} The authors thank Lingyao Xie for discussions on the results of their paper. AI plays a supporting role in this project. We used the  OpenAI GPT-6.0-pro model to polish the article and the proof of the appendix. The authors are fully responsible for all assertions in this paper.

\section{Preliminaries}\label{sec:preliminaries}

Unless stated otherwise, we work with the family in
Theorem~\ref{thm:main}. We may assume its fibers are connected and
write $n=\dim X_t$.

\subsection{Cohomology and positivity}
On a smooth complex manifold $M$, write
\[
 H^{1,1}_{\BC}(M,\R)
 =\frac{\{\text{real, smooth, $d$-closed $(1,1)$-forms on $M$}\}}
 {\{\ddc u:u\in C^\infty(M,\R)\}}.
\]
The normalization of $d^c$ will not matter. When $M$ is compact
K\"ahler, the $\partial\bar\partial$-lemma identifies this group with
the real $(1,1)$-part of de Rham cohomology. We use the same symbol for a
Bott--Chern class and its de Rham image when there is no ambiguity.
The K\"ahler cone is denoted by $\KK(M)$, and
\[
 \Nef(M)=\overline{\KK(M)}.
\]
In particular, the sum of a nef class and a K\"ahler class is
K\"ahler.

On a reduced K\"ahler space, a K\"ahler form is understood in the
usual local-potential sense: in local embeddings it is represented by
smooth strictly plurisubharmonic potentials, with pluriharmonic
differences.

\subsection{Relative compact cycles and their periods}
Write $\Cyc_1(X)$ for the reduced Barlet space of compact effective
one-cycles on $X$. A cycle is a finite sum
\[
 Z=\sum_{i=1}^q m_iC_i,
 \qquad m_i\in\Z_{>0},
\]
where the $C_i$ are distinct irreducible compact analytic curves.
We write $|Z|=\bigcup_i C_i$ for its support. The zero cycle will be
excluded throughout the relative argument.

The relative Barlet space is the reduced complex space
\begin{equation}\label{eq:relative-barlet}
 \Cyc_1(X/\Delta)
 =\{(t,Z)\in\Delta\times\Cyc_1(X):Z\ne0,\ |Z|\subset X_t\}.
\end{equation}
It has the usual analytic structure representing analytic families of
compact cycles contained in fibers, a holomorphic projection
$p\colon\Cyc_1(X/\Delta)\to\Delta$, and a holomorphic forgetful map to
$\Cyc_1(X)$.

\begin{lemma}\label{lem:periods}
Let $\theta$ be a smooth real $d$-closed two-form on $X$. The function
\[
 Z\longmapsto\int_Z\theta
\]
is locally constant on $\Cyc_1(X)$. In particular, the $\omega$-area
and the degree with respect to a fixed holomorphic line bundle on $X$
are constant on every connected family of compact one-cycles.
\end{lemma}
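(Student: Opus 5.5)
The plan is to show that the integral depends only on the homology class of the cycle, and that the homology class is locally constant in Barlet families.

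First, for a single cycle $Z=\sum m_iC_i$, define $\int_Z\theta=\sum_i m_i\int_{C_i^{\mathrm{reg}}}\theta$. Each irreducible compact curve $C_i$ has a fundamental class $[C_i]\in H_2(X,\Z)$, obtained for instance by pushing forward the fundamental class of its normalization $\nu_i\colon\tilde C_i\to C_i$, a compact Riemann surface. Since $\nu_i$ is an isomorphism off finitely many points, $\int_{C_i}\theta=\int_{\tilde C_i}\nu_i^*\theta=\langle[\theta],\nu_{i*}[\tilde C_i]\rangle$. By Stokes, this depends only on the de Rham class $[\theta]$ and the homology class $[Z]=\sum m_i\nu_{i*}[\tilde C_i]$. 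It therefore suffices to show that $Z\mapsto[Z]\in H_2(X,\R)$ is locally constant on $\Cyc_1(X)$.

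Second, I would prove this local constancy using the integration currents. By Barlet's theory, the map $Z\mapsto[Z]$, sending a cycle to its current of integration, is continuous from $\Cyc_1(X)$ into the space of currents with the weak topology: for every smooth compactly supported two-form $\eta$, the function $Z\mapsto\int_Z\eta$ is continuous. This is the basic continuity property of Barlet families, where the cycles vary continuously as multigraphs in adapted local charts.

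Fix $Z_0$. Near $Z_0$, all supports lie in a fixed relatively compact neighbourhood $W$ of $|Z_0|$, since Barlet's topology controls supports. Take a cutoff $\chi$ equal to $1$ on $\overline W$. Then for nearby $Z$ we have $\int_Z\theta=\int_Z\chi\theta$, and the right-hand side is continuous in $Z$. So $Z\mapsto\int_Z\theta$ is continuous.

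To upgrade continuity to local constancy, two routes are available.

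\emph{Route (a), integrality.} Since $X$ retracts onto a neighbourhood of a compact set, write $[\theta]|_{W'}=\sum_k a_k c_k$ with $c_k$ integral classes on a neighbourhood $W'$ of $\overline W$ that retracts to its compact core, for instance a regular neighbourhood. By linearity it suffices to treat closed forms representing integral classes. For these, $\int_Z\theta=\langle c_k,[Z]\rangle\in\Z$ is a continuous integer-valued function, hence locally constant.

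\emph{Route (b), homotopy.} Use Barlet's local description to exhibit a homotopy between nearby cycles.

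I prefer route (a). It needs $H^2(W';\R)=H^2(W';\Z)\otimes\R$ with finite rank, which holds for a regular neighbourhood of a compact analytic set. Such a set is triangulable, so it has a neighbourhood deformation retracting onto a finite complex.

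Finally, the consequences follow. The $\omega$-area is the case $\theta=\omega$. The degree of a holomorphic line bundle $L$ is the case where $\theta$ is the curvature form of any hermitian metric on $L$, suitably normalized; this is closed and represents $c_1(L)$, and $\deg(L|_Z)=\int_Z\theta$ by Chern–Weil on the normalizations. Connected families then have constant values. Since $\Cyc_1(X/\Delta)$ maps holomorphically to $\Cyc_1(X)$, the same holds there.

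The main obstacle is the second step: making rigorous the continuity of integration currents together with the uniform localization of supports near a given cycle. I would cite Barlet's theorem on the continuity of $Z\mapsto\int_Z\eta$ for the topology of the cycle space rather than reprove it.
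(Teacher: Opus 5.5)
Your proof is correct, but at the decisive step it runs differently from the paper's. The paper follows your route (b). Citing Barlet, it asserts that sufficiently close cycles are homologous in a neighbourhood of the reference support, and Stokes then gives equality of the periods. The ``continuous integer-valued function'' argument appears in the paper only as an alternative for Chern forms of line bundles.

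You instead take from Barlet theory only the weaker and more standard input that $Z\mapsto\int_Z\eta$ is continuous for smooth compactly supported $\eta$, together with the localization of supports. You then extend the integrality trick to an arbitrary closed $\theta$ by writing $[\theta]|_{W'}$ as a real combination of integral classes on a neighbourhood $W'$ with finitely generated $H_2$. This avoids producing any homology between nearby cycles. By duality it even recovers local constancy of $[Z]$ in $H_2(W';\R)$. The paper's route is shorter and gives the integral statement directly, but it rests on the cited topological fact.

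Three details are worth making explicit. First, the representatives $\theta_k$ exist only on $W'$, so your cutoff must be supported in $W'$ and equal $1$ on $\overline W$. Second, replacing $\theta$ by $\sum_k a_k\theta_k$ uses $\int_Z d\eta=0$ for $1$-forms $\eta$ on $W'$, which is Stokes on the normalizations exactly as in your first step. Third, the finiteness input is easier than triangulating an analytic set. $W'$ need not be small, so any $W'\Supset\overline W$ that is the interior of a compact smooth manifold with boundary has finitely generated homology; a sublevel set of a smooth proper exhaustion function at a large regular value is an example.
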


\begin{proof}
The homology class of a compact cycle is locally constant in the
cycle topology: sufficiently close cycles represent the same class
in a neighborhood of the reference support. Integration of a closed
form therefore has the same value on them. This standard topological
property of cycle spaces also gives the local constancy of K\"ahler
volume; see \cite[Theorem~1 and the following remark]{Bar99}.
For a line bundle, one may alternatively use a smooth Chern form:
its integral is a continuous integer-valued function, and hence is
locally constant. Restriction to the relative cycle space preserves
these conclusions.
\end{proof}

\begin{proposition}\cite[Proposition 3.9]{LL24}
\label{prop:barlet-proper}
Let $f\colon X\to\Delta$ be proper, and suppose that $X$ carries a
K\"ahler form $\omega$. For every irreducible component $B$ of
$\Cyc_1(X/\Delta)$, the map
\[
 p_B:=p|_B\colon B\longrightarrow\Delta
\]
is proper. Its image is either a single point or all of $\Delta$.
\end{proposition}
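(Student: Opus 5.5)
The plan is to prove properness of $p_B$ via Bishop's compactness theorem for cycles of bounded volume, in Barlet's form, and then use properness together with irreducibility to get the dichotomy for the image.

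\emph{Step 1 (area is constant on $B$).} By Lemma~\ref{lem:periods}, the $\omega$-area $Z\mapsto\int_Z\omega$ is locally constant on $\Cyc_1(X)$. Since $B$ is irreducible, hence connected, the area equals a constant $A_B>0$ on $B$; it is positive because $Z\ne0$ and $\omega$ is K\"ahler.

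\emph{Step 2 (properness).} Let $K\subset\Delta$ be compact. Then $f^{-1}(K)$ is compact because $f$ is proper, and every $Z$ with $(t,Z)\in p_B^{-1}(K)$ has $|Z|\subset f^{-1}(K)$ and area $A_B$.
\begin{itemize}
\item Bishop's theorem, as used by Barlet \cite{Bar99}, says that the set of compact effective one-cycles supported in a fixed compact set and of bounded $\omega$-volume is relatively compact in $\Cyc_1(X)$. Fujiki's version, for a K\"ahler form on a possibly noncompact manifold, applies equally.
\item Take a sequence $(t_k,Z_k)\in p_B^{-1}(K)$. After passing to a subsequence, $t_k\to t\in K$ and $Z_k\to Z$ in $\Cyc_1(X)$.
\item The limit satisfies $|Z|\subset X_t$, because supports converge in the Hausdorff sense and $f$ is continuous. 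The limit is nonzero, because its area is $A_B>0$ by continuity of volume.
\item So $(t,Z)\in\Cyc_1(X/\Delta)$. Since $B$ is closed in $\Cyc_1(X/\Delta)$, as an irreducible component, the limit lies in $B$.
\end{itemize}
Hence $p_B^{-1}(K)$ is sequentially compact. The space is metrizable on compacta, so it is compact and $p_B$ is proper.

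\emph{Step 3 (image).} By Remmert's proper mapping theorem, $p_B(B)$ is an irreducible analytic subset of $\Delta$, since $B$ is irreducible. The irreducible analytic subsets of the one-dimensional connected manifold $\Delta$ are points and $\Delta$ itself. So the image is a single point or all of $\Delta$.

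The main obstacle is Step 2: justifying Bishop–Barlet compactness in the noncompact total space $X$, and checking that limits stay inside the relative space and inside $B$. Bounded area and containment in the compact set $f^{-1}(K)$ are exactly the hypotheses needed. Exclusion of the zero cycle follows from area constancy. Closedness of $B$ is automatic for an irreducible component of the reduced space $\Cyc_1(X/\Delta)$.
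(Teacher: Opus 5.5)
Your proposal is correct and follows essentially the same route as the paper. Both arguments take the constant $\omega$-area on $B$ from Lemma~\ref{lem:periods} and use Bishop--Barlet compactness of cycles supported in $f^{-1}(K)$ with a sequential argument that the limit is nonzero, lies in $X_t$, and stays in the closed component $B$; both then conclude with Remmert's theorem and irreducibility to get the point-or-disc dichotomy.
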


\begin{proof}
By Lemma~\ref{lem:periods}, the positive number
\[
 v_B=\int_Z\omega
\]
is independent of $(t,Z)\in B$. Fix a compact set $K\subset\Delta$.
Every cycle parametrized by $p_B^{-1}(K)$ is supported in the compact
set $f^{-1}(K)$ and has $\omega$-area $v_B$. The compactness theorem
for cycles \cite[Theorem~1]{Bar99}, which follows from Bishop's
theorem \cite{Bishop}, gives relative compactness of these cycles in
$\Cyc_1(X)$.

For completeness, consider a sequence $(t_j,Z_j)\in p_B^{-1}(K)$.
After passing to a subsequence, $t_j\to t\in K$ and $Z_j\to Z$ in
the cycle space. Continuity of integration gives
$\int_Z\omega=v_B>0$, so $Z$ is nonzero. Continuity of supports and
of $f$ implies $|Z|\subset X_t$. Thus $(t,Z)$ is a point of the
relative cycle space. Irreducible components of a complex space are
closed, so $(t,Z)\in B$. Hence $p_B^{-1}(K)$ is compact. Here one
may use sequential compactness since complex spaces are metrizable
under the usual countability convention.

Remmert's proper mapping theorem makes $p_B(B)$ a closed analytic
subset of $\Delta$. It is irreducible and nonempty; a proper analytic
subset of a disc is discrete. Thus the image is a single point or
the whole disc.
\end{proof}

\subsection{Relative cycles and confinement of negative curves}
\label{sec:barlet}

\begin{proposition}
\label{prop:negative-vertical}
Let $A$ be a holomorphic line bundle on $X$, and assume that
$A|_{X_{t_*}}$ is nef for some $t_*\in\Delta$. If an irreducible
component $B\subset\Cyc_1(X/\Delta)$ contains a cycle of negative
$A$-degree, then $p_B(B)$ is a single point different from $t_*$.
\end{proposition}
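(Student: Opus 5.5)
We combine Lemma~\ref{lem:periods} with Proposition~\ref{prop:barlet-proper} and then use nefness on the single fiber $X_{t_*}$ to rule out the other alternatives.

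First, the $A$-degree is constant on $B$. Take a smooth Chern form $\theta$ of $A$ on $X$. It is a real closed two-form, so by Lemma~\ref{lem:periods} the function $Z\mapsto\int_Z\theta=\deg_A Z$ is locally constant on $\Cyc_1(X)$. Hence it is locally constant on the relative cycle space as well. An irreducible complex space is connected, so $\deg_A Z=:a_B$ is constant for $(t,Z)\in B$. By hypothesis $B$ contains a cycle of negative $A$-degree, so $a_B<0$.

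Next, we apply the dichotomy. By Proposition~\ref{prop:barlet-proper}, $p_B(B)$ is either a single point or all of $\Delta$. Suppose $t_*\in p_B(B)$; this covers the case $p_B(B)=\Delta$ and the case of the single point $t_*$. Then there is $(t_*,Z)\in B$ with $Z=\sum m_iC_i\ne0$ and every $C_i\subset X_{t_*}$ an irreducible compact curve. Since $A|_{X_{t_*}}$ is nef, each $\deg_{A}C_i=\deg_{A|_{X_{t_*}}}C_i\ge0$; here we use that nefness on a compact K\"ahler manifold gives nonnegative degree on curves, since a limit of K\"ahler classes integrates nonnegatively against $[C_i]$. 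Thus $a_B=\sum m_i\deg_A C_i\ge0$, contradicting $a_B<0$. Therefore $t_*\notin p_B(B)$. This excludes $p_B(B)=\Delta$, so $p_B(B)$ is a single point, and that point differs from $t_*$.

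The only subtle point is the first step. We need the degree computed via the ambient Chern form on $X$ to agree with the intrinsic degree on fibers, and we need constancy along all of $B$, not merely near a point. The first holds because $\theta|_{X_t}$ is a Chern form of $A|_{X_t}$. The second holds because $B$ is connected and the degree is locally constant. Both are routine given Lemma~\ref{lem:periods}.
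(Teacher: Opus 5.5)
Your proposal is correct and follows the paper's proof essentially verbatim. Both arguments use the same three ingredients: constancy of the $A$-degree on the connected component $B$ via Lemma~\ref{lem:periods}, nonnegativity of that degree on any cycle contained in $X_{t_*}$ by nefness, and the point-or-disc dichotomy of Proposition~\ref{prop:barlet-proper}. Your one difference is ordering: you first rule out $t_*\in p_B(B)$, which handles the $\Delta$ case and the single point $t_*$ together and is a slightly cleaner form of the paper's remark that ``the same argument excludes $t_*$''.
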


\begin{proof}
By Lemma~\ref{lem:periods}, the integer
$d_B=\deg_A Z$ is constant for $(t,Z)\in B$. By hypothesis $d_B<0$.
If $p_B(B)=\Delta$,
there is a cycle $Z_*=\sum_i m_iC_i$ in $X_{t_*}$ represented by a
point of $B$. Analytic nefness implies nonnegative degree on each
irreducible curve, so
\[
 d_B=A|_{X_{t_*}}\cdot Z_*
 =\sum_i m_i\bigl(A|_{X_{t_*}}\cdot C_i\bigr)\ge0,
\]
a contradiction. Proposition~\ref{prop:barlet-proper} gives the stated
single-point alternative, and the same argument excludes $t_*$.
\end{proof}

\begin{corollary}
\label{cor:vertical-germ}
Under the hypotheses of Proposition~\ref{prop:negative-vertical}, let
$Z\subset X_s$ be a nonzero effective cycle with $A\cdot Z<0$.
The reduced analytic germ of $\Cyc_1(X/\Delta)$ at $(s,Z)$ maps
constantly to $s$.
\end{corollary}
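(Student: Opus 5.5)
The plan is to reduce the statement about the germ to Proposition~\ref{prop:negative-vertical}, applied to each irreducible component of $\Cyc_1(X/\Delta)$ passing through the point $(s,Z)$. We work with the reduced structure throughout.

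First, the point $(s,Z)$ lies in $\Cyc_1(X/\Delta)$, because $Z\ne0$ and $|Z|\subset X_s$. The relative cycle space is a reduced complex space, hence locally it has finitely many irreducible components. So there is an open neighbourhood $W$ of $(s,Z)$ with
\[
 W\subset B_1\cup\dots\cup B_k ,
\]
where $B_1,\dots,B_k$ are the global irreducible components that contain $(s,Z)$. The germ of $\Cyc_1(X/\Delta)$ at $(s,Z)$ is then the union of the germs of the $B_j$ at that point. Every local irreducible component of the germ lies in one of these global components.

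Next, take a single $B_j$. It contains the cycle $Z$, and $\deg_A Z=A\cdot Z<0$. Proposition~\ref{prop:negative-vertical} therefore applies, and $p_{B_j}(B_j)$ is a single point. Since $p(s,Z)=s$, that point must be $s$, so $p$ is identically $s$ on $B_j$. The same holds for every $j$, so $p$ is constant equal to $s$ on $W$. This is exactly the statement that the reduced germ at $(s,Z)$ maps constantly to $s$. As a byproduct, the proposition also gives $s\ne t_*$.

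The only point needing care is the passage from the germ to the global irreducible components. Here I will use the standard fact that the irreducible components of a (reduced, countable-at-infinity) complex space form a locally finite family of closed irreducible analytic subsets whose union is the whole space. Local finiteness means only finitely many components meet a small neighbourhood. Shrinking the neighbourhood, the ones not containing $(s,Z)$ can be discarded, since components are closed. With this in hand, everything else is a direct application of Proposition~\ref{prop:negative-vertical}. I do not expect any further obstacle.
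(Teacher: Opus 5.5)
Your proposal is correct and follows essentially the same argument as the paper: the germ at $(s,Z)$ is covered by the finitely many global irreducible components through that point, each of which contains the negative-degree cycle $Z$ and therefore, by Proposition~\ref{prop:negative-vertical}, maps entirely to $s$. The paper states explicitly the final step you leave implicit: on a reduced space, a holomorphic function that takes the value $s$ at every point is identically $s$, and this upgrades pointwise constancy on $W$ to the germ-level statement.
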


\begin{proof}
Each local irreducible branch of this germ is contained in a global
irreducible component through $(s,Z)$. Every such component has
negative degree and, by Proposition~\ref{prop:negative-vertical}, maps
to the single point $s$. There are finitely many local branches.
After choosing a sufficiently small representative of the germ, all
of them map to $s$. On a reduced analytic space, a holomorphic function
which has the constant value $s$ at every point is identically $s$.
\end{proof}

\begin{lemma}\label{lem:barlet-confinement}
Assume $A|_{X_{t_*}}$ is nef. Let $u\colon\Pp^1\to X_s$ be a
holomorphic map with $\deg u^*A<0$. Then the reduced local analytic
space of maps $\Hom(\Pp^1,X)_{\red}$ at $[u]$ parametrizes only maps
whose images lie in $X_s$. On each reduced representative the
universal map factors holomorphically through $X_s$.
\end{lemma}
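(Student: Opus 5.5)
The plan is to pass from maps to their image cycles and then apply Corollary~\ref{cor:vertical-germ}. Let $H$ be a reduced irreducible representative of a local branch of $\Hom(\Pp^1,X)_{\red}$ at $[u]$, with universal map $F\colon H\times\Pp^1\to X$. The map $u$ is nonconstant, since $\deg u^*A<0$. Shrinking $H$, every $F_h=F(h,\cdot)$ is nonconstant, because the $\omega$-area $\int_{\Pp^1}F_h^*\omega$ is locally constant in $h$ and positive at $[u]$.

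First consider $f\circ F_h\colon\Pp^1\to\Delta$. It is a holomorphic map from a compact connected curve to the disc, hence constant. So each $F_h$ lands in a single fiber $X_{\sigma(h)}$, and $\sigma(h)=f(F(h,x_0))$ is holomorphic on $H$ for any fixed $x_0\in\Pp^1$.

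Next, define the image-cycle map
\[
 \Phi\colon H\to\Cyc_1(X/\Delta),\qquad h\longmapsto\bigl(\sigma(h),\,(F_h)_*[\Pp^1]\bigr).
\]
The key input is that $\Phi$ is holomorphic, or at least continuous with image inside an analytic germ. I would justify this by the standard fact that the direct image of the family of cycles $H\times\Pp^1\to H$ under the proper map $(\mathrm{pr}_H,F)\colon H\times\Pp^1\to H\times X$ is an analytic family of compact cycles (Barlet's direct-image theorem for proper maps over reduced bases). The cycle $(F_h)_*[\Pp^1]$ is nonzero, its support lies in $X_{\sigma(h)}$, and $\sigma=p\circ\Phi$. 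This step is the main obstacle, and the only place where nontrivial Barlet theory enters. Continuity alone would suffice, because $\Phi(H)$ then lands in a small neighborhood of $(s,Z)$, where $Z=u_*[\Pp^1]$. One checks that $A\cdot Z=\deg u^*A<0$ by the projection formula.

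By Corollary~\ref{cor:vertical-germ}, $p$ is identically $s$ on a small representative of the reduced germ of $\Cyc_1(X/\Delta)$ at $(s,Z)$. Shrinking $H$ so that $\Phi(H)$ lies in this representative gives $\sigma\equiv s$ pointwise on $H$. Since $H$ is reduced and $\sigma$ is a holomorphic function with constant value $s$ at every point, $\sigma\equiv s$ holomorphically. Hence $F$ takes values in $f^{-1}(s)=X_s$ set-theoretically.

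To factor $F$ holomorphically through $X_s$, observe that $X_s$ is a smooth fiber and $t-s$ is a defining function for it. The pullback $F^*(t-s)=(\sigma-s)\circ\mathrm{pr}_H$ vanishes identically on the reduced space $H\times\Pp^1$. Therefore $F^{-1}$ of the ideal of $X_s$ is zero, and $F$ factors through the closed subspace $X_s$. Applying this to the finitely many branches at $[u]$ gives the lemma.
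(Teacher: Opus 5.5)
Your proposal is correct and follows the paper's proof essentially step for step: you pass from the reduced map germ to the image-cycle map $h\mapsto(\sigma(h),(F_h)_*[\Pp^1])$ into $\Cyc_1(X/\Delta)$, apply Corollary~\ref{cor:vertical-germ} to get $\sigma\equiv s$, and use reducedness of $H\times\Pp^1$ to factor the universal map through $X_s$. The differences are minor. Your direct image under $(\mathrm{pr}_H,F)$ is a repackaging of the paper's construction (fundamental cycles of graphs, then pushforward to $X$), and your observation that continuity of $\Phi$ already suffices is correct and a slight economy, because $\sigma$ is holomorphic independently of $\Phi$.
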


\begin{proof}
Every holomorphic map $v\colon\Pp^1\to X$ lies in a single fiber of
$f$, because $f\circ v$ is a holomorphic function on $\Pp^1$. Fixing
$z_0\in\Pp^1$, its parameter is the holomorphic function
\[
 \sigma(v)=f(v(z_0))
\]
on the map space. The maps remain nonconstant near $u$:
the integer $\deg v^*A$ is locally constant under continuous
deformation and is negative at $u$.

On a reduced local parameter space $H$ for these maps, the graph of
the universal map is a flat family of compact one-dimensional
subspaces of $\Pp^1\times X$. Passing to its fundamental cycles and
then pushing forward by the proper projection
$\Pp^1\times X\to X$ gives a holomorphic cycle map
\begin{equation}\label{eq:map-cycle}
 H\longrightarrow\Cyc_1(X/\Delta),\qquad
 v\longmapsto\bigl(\sigma(v),v_*[\Pp^1]\bigr).
\end{equation}
The cycle has the generic covering degree as multiplicity on its
rational image. Its $A$-degree is exactly $\deg v^*A<0$; thus the
multiplicities are essential. The graph fibers are smooth and have no embedded components, so the
holomorphic fundamental-cycle map applies
\cite[Theorem~10.2.1]{Mag}. Proper pushforward of these cycles is
holomorphic \cite{Bar75,Bar99}. This uses only the map for graph
families; no properness or surjectivity of a general Douady-to-Barlet
map is being asserted.

Apply Corollary~\ref{cor:vertical-germ} at
$(s,u_*[\Pp^1])$ to \eqref{eq:map-cycle}. It gives $\sigma=s$ on the
reduced germ. Consequently the holomorphic function $f\circ v-s$ on
$H\times\Pp^1$ is zero at every point and hence identically zero.
Here the product is reduced because $H$ is reduced and $\Pp^1$ is
smooth. The universal map therefore factors through the reduced smooth
fiber $X_s$ as a holomorphic map.
\end{proof}

\subsection{External contraction and cone theorems}
We also need the following two theorems on contraction maps from birational geometry on analytic varieties.
\begin{theorem}[Hacon--Xie, smooth case]\label{ext:HX}
Let $M$ be a compact K\"ahler manifold and $\eta$ a K\"ahler class.
If $c_1(K_M)+\eta$ is nef, there are a projective contraction
$g\colon M\to Y$ onto a normal compact K\"ahler space and a
K\"ahler class $\beta$ on $Y$ such that
\begin{equation}\label{eq:HX}
 c_1(K_M)+\eta=g^*\beta\quad\text{in }H^{1,1}_{\BC}(M,\R).
\end{equation}
Here a contraction has $g_*\OO_M=\OO_Y$.
\end{theorem}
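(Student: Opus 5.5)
This statement is an external theorem, the smooth case of the transcendental base-point-free theorem of Hacon--Xie \cite[Theorem~1.4]{HX26}. The paper quotes it rather than proving it, so the plan is to reduce to that reference and check that its hypotheses and conclusions match ours.

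\emph{Step 1: setting.} Hacon--Xie work with a compact K\"ahler klt pair $(M,B)$ and a nef class of the form $K_M+B+\eta$, where $\eta$ is K\"ahler. Taking $B=0$ is allowed, since a compact K\"ahler manifold is klt. The hypothesis ``$c_1(K_M)+\eta$ nef'' is exactly their assumption. I also need to check that their notion of nef coincides with membership in $\Nef(M)=\overline{\KK(M)}$. On a compact K\"ahler manifold this holds because the $\partial\bar\partial$-lemma identifies Bott--Chern and de Rham $(1,1)$-classes.

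\emph{Step 2: outline of the cited proof.} Set $\alpha=c_1(K_M)+\eta$.
\begin{itemize}
\item If $\alpha$ is K\"ahler, take $g=\mathrm{id}$ and $\beta=\alpha$.
\item Otherwise $\alpha$ is nef but not K\"ahler. The K\"ahler cone theorem (H\"oring--Peternell in low dimension, Das--Hacon--P\u{a}un in general, extended by Hacon--Xie) shows that the face $\alpha^\perp$ of the cone of positive closed currents is $K_M$-negative and rational. Concretely, $\alpha\cdot\Gamma_i=0$ cuts out a face spanned by finitely many rational curves $\Gamma_i$ with $K_M\cdot\Gamma_i<0$.
\item One then runs the analytic MMP with scaling, or a relative basepoint-free argument, to contract this face.
\end{itemize}
Three facts drive the construction of $g$. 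First, the null locus of $\alpha$ is covered by rational curves; this is Collins--Tosatti together with bend-and-break. Second, $\alpha-\epsilon K_M$ is K\"ahler for small $\epsilon>0$. Third, relative Kawamata--Viehweg vanishing holds along these loci. Together these show that multiples of a suitable line bundle are relatively globally generated near the null locus. Gluing then yields a contraction $g\colon M\to Y$ with $g_*\OO_M=\OO_Y$, which is projective because $-K_M$ is $g$-ample, and a class $\beta$ with $g^*\beta=\alpha$.

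\emph{Step 3: $\beta$ is K\"ahler on $Y$.} I expect this to be the main obstacle. Because $\alpha$ is nef and is positive on every curve not contracted by $g$, one would apply Demailly--P\u{a}un-type criteria on the normal space $Y$. The delicate part is constructing local potentials on a singular base. The identity \eqref{eq:HX} must also hold in Bott--Chern cohomology and not merely in de Rham cohomology. To get this, one uses that $g$ has connected fibers, so $g_*\OO_M=\OO_Y$, and that on a compact K\"ahler $M$ pluriharmonic functions on $g$-fibers descend; hence $\ddc$-exactness descends to $Y$.

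All of this is carried out in \cite{HX26}. For our purposes the proof consists of citing it and verifying that $B=0$ and that the nef and Bott--Chern conventions match.
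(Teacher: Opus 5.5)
Your proposal matches the paper's argument: the statement is quoted from \cite[Theorem~1.4]{HX26} with zero boundary, and the only work is matching hypotheses and conventions. The paper does this by treating $\eta$ as the modified big nef part of a generalized klt pair, using strong $\Q$-factoriality of the smooth $M$ for the projectivity clause, and noting that $\equiv$ in \cite{HX26} means Bott--Chern equality rather than numerical equality on curves. Two of your side remarks are off target but harmless: Bott--Chern versus de Rham equality is automatic on compact K\"ahler $M$ by the $\partial\bar\partial$-lemma, so the descent argument in your Step~3 is unnecessary, and your claim that ``$-K_M$ is $g$-ample'' needs a reason, e.g.\ restricting \eqref{eq:HX} to a fiber $F$ gives $-K_M|_F$ a metric whose curvature is a K\"ahler form representing $\eta|_F$, so it is ample by Grauert's criterion.
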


This is the specialization of \cite[Theorem~1.4]{HX26} with zero
boundary and nef part $\eta$ descending to $M$. The pair is generalized
klt, and $\eta$ is modified big. Moreover, a smooth manifold is strongly
$\Q$-factorial: every rank-one reflexive sheaf is a line bundle.
Thus the projectivity clause of that theorem applies. In
\cite[Section~2]{HX26}, the symbol $\equiv$ denotes equality in
Bott--Chern cohomology, not merely equality of degrees on curves.
Consequently \eqref{eq:HX} is a cohomological pullback identity, as
required for the hard Lefschetz argument below.

\begin{theorem}[Fujino, compact smooth case]
\label{ext:cone}
Let $h\colon M\to Y$ be a projective morphism between compact complex
spaces, with $M$ smooth. If $K_M$ is not relatively nef, there is an
$h$-contracted rational curve $C$ with
\[
 0<-K_M\cdot C\le 2\dim M.
\]
\end{theorem}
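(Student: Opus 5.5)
This statement is quoted from Fujino's cone and contraction theorem for projective morphisms of complex analytic spaces. I would not reprove it from scratch. Instead I would reduce it to the relative cone theorem in that setting and combine it with Mori's bend-and-break length bound. The structure is the classical projective proof, carried out relative to $Y$.

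\textbf{Step 1: reduce to a small neighbourhood of a fibre.} Since $K_M$ is not $h$-nef, there is an irreducible curve $C_0\subset M$ with $h(C_0)$ a point $y\in Y$ and $K_M\cdot C_0<0$. Take a relatively compact Stein open neighbourhood $W\ni y$. Then $h^{-1}(W)\to W$ is projective, with an $h$-ample line bundle $H$. Nefness and degrees of $h$-contracted curves can be computed over $W$. The $h$-contracted curves of $h^{-1}(W)$ are exactly the curves in the fibres.

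\textbf{Step 2: apply the relative cone theorem over $W$.} I would use the relative cone theorem for $K_{M}$ on the projective morphism $h^{-1}(W)\to W$, after shrinking $W$ around a compact subset, as in Fujino's analytic minimal model program (Nakayama's framework). It writes
\[
\overline{NE}(h^{-1}(W)/W)=\overline{NE}_{K_M\ge0}+\sum_j \R_{\ge0}[C_j].
\]
The sum runs over countably many $K_M$-negative extremal rays, each spanned by an $h$-contracted rational curve $C_j$ with $0<-K_M\cdot C_j\le 2\dim M$. Since $[C_0]$ is $K_M$-negative, at least one such ray exists, and that gives the curve $C$.

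\textbf{Step 3: the length bound via bend-and-break.} If I had to justify the length bound directly, I would use Mori's method. For an $h$-contracted curve $C'$ with $K_M\cdot C'<0$, the fibre $h^{-1}(y)$ is a projective scheme, since $h$ is projective. Take the normalization $\nu\colon\tilde C'\to M$ of $C'$ and deform $\nu$ fixing a point. Deformation theory gives
\[
\dim_{[\nu]}\Hom(\tilde C',M;\,p\mapsto x)\ge -K_M\cdot C'-n\,g(\tilde C').
\]
These deformations stay in the fibre of $h$, because $h\circ\nu$ is constant. Reduction mod $p$ and Frobenius then produce a rational curve. Bend-and-break in the projective fibre lowers the $-K_M$-degree until it is at most $n+1\le 2n$.

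The main obstacle is that the mod-$p$ step requires an algebraic model. The point to check is that $h^{-1}(y)$, or a projective neighbourhood of it, carries enough algebraic structure for the deformation-dimension estimate. That estimate uses the smooth ambient $M$, not just the possibly singular fibre. I would take the analytic projective statement as established in Fujino's work and cite it rather than redo this.
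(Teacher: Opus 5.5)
Your proposal is correct and is essentially the paper's argument: both just invoke Fujino's relative cone theorem \cite[Theorem~1.2(5)]{Fuj23} with zero boundary to get an $h$-contracted rational curve with $0<-K_M\cdot C\le 2\dim M$. Your Step~3 sketch is unnecessary, and, as you note yourself, it cannot be completed without an algebraic model of $M$ near the fibre.

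The only difference is where the theorem is applied. The paper uses the compact target itself, taking the required hypothesis $\dim_\R N^1(M/Y;W)<\infty$ from \cite[Remark~1.6(iii)]{Fuj23}, while you localize around $y$. You should state that finiteness hypothesis explicitly in your version; it holds, for instance, for $W=\{y\}$, since that space injects into the finite-dimensional space of numerical classes on the projective fibre $h^{-1}(y)$.
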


This follows from \cite[Theorem~1.2(5)]{Fuj23} with zero boundary.
The required finite-dimensionality of the relative numerical space
holds for a compact target by \cite[Remark~1.6(iii)]{Fuj23}; no Stein
hypothesis on the whole target is needed.

\section{A fixed boundary class and hard Lefschetz}\label{sec:threshold}

 If a non-nef fiber
exists, an automorphism of the disc sends its parameter to $0$; we
make this change of base coordinate. We retain the known nef fiber
$X_{t_*}$, now with $t_*\ne0$. Fix a K\"ahler form $\omega$ on $X$,
and write $\omega_t=\omega|_{X_t}$ and $M=X_0$. We do not shrink the
disc in a way that discards $t_*$, and we make no assertion about
nefness on the other fibers.

\begin{lemma}[The positive nef threshold]\label{lem:threshold}
Suppose $K_M$ is not nef. Define
\begin{equation}\label{eq:tau}
 \tau=\inf\{s\ge0:c_1(K_M)+s[\omega_0]\in\KK(M)\}.
\end{equation}
Then $0<\tau<\infty$, and
\[
 \alpha_0=c_1(K_M)+\tau[\omega_0]
\]
is nef but not K\"ahler. For $s>\tau$, the class
$c_1(K_M)+s[\omega_0]$ is K\"ahler.
\end{lemma}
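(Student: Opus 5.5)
The plan is to use only the convexity and openness of the Kähler cone $\KK(M)$ in $H^{1,1}_{\BC}(M,\R)$, the fact that $\Nef(M)=\overline{\KK(M)}$, and the remark in the preliminaries that a nef class plus a Kähler class is Kähler. Write $S=\{s\ge0: c_1(K_M)+s[\omega_0]\in\KK(M)\}$, so that $\tau=\inf S$.

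First, $\tau<\infty$, i.e.\ $S\neq\varnothing$. Fix a smooth real representative $\rho$ of $c_1(K_M)$. Since $M$ is compact and $\omega_0$ is Kähler, there is $C>0$ with $\rho+C\omega_0>0$ pointwise. Hence $C\in S$.

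Second, $S$ is an upward-closed ray. If $s\in S$ and $s'>s$, then
\[
c_1(K_M)+s'[\omega_0]=\bigl(c_1(K_M)+s[\omega_0]\bigr)+(s'-s)[\omega_0]
\]
is a sum of two Kähler classes, hence Kähler. Since $\KK(M)$ is open and $s\mapsto c_1(K_M)+s[\omega_0]$ is continuous, $S$ is relatively open in $[0,\infty)$. This already gives the last sentence of the lemma: every $s>\tau$ exceeds some element of $S$, so lies in $S$.

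Third, $\alpha_0$ is nef. It is the limit of the Kähler classes $c_1(K_M)+s[\omega_0]$ as $s\downarrow\tau$, so it lies in $\overline{\KK(M)}=\Nef(M)$.

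Fourth, $\alpha_0$ is not Kähler. If it were, openness of $\KK(M)$ would put $c_1(K_M)+s[\omega_0]$ in $\KK(M)$ for all $s$ near $\tau$. For $\tau>0$, this includes points $s<\tau$, contradicting the definition of the infimum. For $\tau=0$, it would make $c_1(K_M)$ itself Kähler, hence nef, contradicting the hypothesis.

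Finally, $\tau>0$. If $\tau=0$, the third step shows $c_1(K_M)$ is nef. Here "nef" for the line bundle $K_M$ means $c_1(K_M)\in\Nef(M)$ in this transcendental sense, and on a compact Kähler manifold this is the usual analytic notion. So $\tau=0$ contradicts the assumption that $K_M$ is not nef.

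The argument is essentially formal. The only point needing care is to keep the same notion of nefness throughout, namely membership of $c_1(K_M)$ in $\overline{\KK(M)}$, so that the hypothesis "$K_M$ not nef" directly excludes $\tau=0$.
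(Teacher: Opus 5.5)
Your proposal is correct and follows essentially the same route as the paper: nonemptiness from a pointwise lower bound on a representative of $c_1(K_M)$, upward closedness, nefness of $\alpha_0$ as a limit of K\"ahler classes as $s\downarrow\tau$, non-K\"ahlerness by openness of $\KK(M)$, and $\tau>0$ because otherwise $c_1(K_M)$ would lie in $\overline{\KK(M)}$. The only cosmetic difference is that you get the final assertion from upward closedness, whereas the paper uses ``nef plus K\"ahler is K\"ahler''; both work.
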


\begin{proof}
Choose a smooth real closed representative $\theta$ of $c_1(K_M)$.
Compactness gives $A>0$ with $\theta\ge-A\omega_0$. Thus the set in
\eqref{eq:tau} is nonempty. It is upward closed: adding a positive
multiple of $[\omega_0]$ preserves the K\"ahler property.

If $\tau=0$, a sequence of classes in this set converges to
$c_1(K_M)$, contradicting non-nefness. Hence $\tau>0$. A sequence
$s_j\downarrow\tau$ of admissible parameters shows that $\alpha_0$
is nef. If $\alpha_0$ were K\"ahler, openness of $\KK(M)$ would make
$c_1(K_M)+(\tau-\varepsilon)[\omega_0]$ K\"ahler for some
$0<\varepsilon<\tau$, which is impossible. Finally,
$\alpha_0+(s-\tau)[\omega_0]$ is K\"ahler for $s>\tau$.
\end{proof}

For every $t\in\Delta$, put
\begin{equation}\label{eq:alphat}
 \alpha_t=c_1(K_{X_t})+\tau[\omega_t].
\end{equation}
Then
\begin{equation}\label{eq:one-kahler-class}
 \alpha_{t_*}\in\KK(X_{t_*}),
\end{equation}
because the sum of a nef class and the K\"ahler class
$\tau[\omega_{t_*}]$ is K\"ahler.

\begin{lemma}\label{lem:flatclass}
Under an Ehresmann trivialization of $f$, the classes $\alpha_t$ are
constant in real cohomology, and the cup-product rings are identified.
\end{lemma}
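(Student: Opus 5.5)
Since $\Delta$ is contractible and $f$ is a proper submersion, Ehresmann's theorem gives a $C^\infty$ trivialization $\Phi\colon M\times\Delta\to X$ over $\Delta$ with $\Phi(\cdot,0)=\mathrm{id}_M$. The plan is to compare the classes $\alpha_t$ through the composites of inclusions with the retraction $r=\mathrm{pr}_M\circ\Phi^{-1}\colon X\to M$. The inclusions $\iota_t\colon X_t\hookrightarrow X$ are homotopy equivalences, because $X$ deformation retracts onto each fiber. Hence each restriction map $\iota_t^*\colon H^\bullet(X,\R)\to H^\bullet(X_t,\R)$ is an isomorphism of graded rings. The identification of $H^\bullet(X_t,\R)$ with $H^\bullet(M,\R)$ given by the trivialization is $(\iota_t^*)\circ(\iota_0^*)^{-1}$, so it is a ring isomorphism. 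This proves the second assertion about cup products.

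For the first assertion, it suffices to show that each summand of $\alpha_t$ is the restriction of a single global de Rham class on $X$. For the K\"ahler part this is immediate: $\omega$ is a closed form on $X$ and $\omega_t=\iota_t^*\omega$, so $[\omega_t]=\iota_t^*[\omega]$.

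For the canonical part, use the relative canonical bundle $K_{X/\Delta}=K_X\otimes f^*K_\Delta^{-1}$, a holomorphic line bundle on $X$. Adjunction for the smooth fiber $X_t$, whose normal bundle $f^*T_\Delta|_{X_t}$ is trivial, gives $K_{X/\Delta}|_{X_t}\cong K_{X_t}$. Therefore
\[
 c_1(K_{X_t})=\iota_t^*c_1(K_{X/\Delta}).
\]
Alternatively, $K_\Delta$ is holomorphically trivial on the disc, so $K_X|_{X_t}\cong K_{X_t}$ directly. Either way, $\alpha_t=\iota_t^*\bigl(c_1(K_{X/\Delta})+\tau[\omega]\bigr)$ with a single global class independent of $t$, and so $\alpha_t$ corresponds to $\alpha_0$ under the identification of the first paragraph.

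Finally, I will check compatibility with the Bott--Chern convention of Section~\ref{sec:preliminaries}. Each fiber is compact K\"ahler, so by the $\partial\bar\partial$-lemma $\alpha_t$ may be regarded as a real de Rham class of type $(1,1)$, and the statement concerns only these de Rham images.

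No real obstacle is expected. The only points needing care are the adjunction isomorphism, which uses that $T_\Delta$ is trivial, and the remark that the identification coming from the trivialization is independent of the choice of $\Phi$, since it equals $(\iota_t^*)\circ(\iota_0^*)^{-1}$.
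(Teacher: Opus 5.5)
Your argument is correct and is essentially the paper's: both write $\alpha_t$ as the restriction of the single class $c_1(K_{X/\Delta})+\tau[\omega]\in H^2(X,\R)$, and both identify the fibers' cohomology rings by homotopy invariance of the fiber inclusions (you over the whole disc via $(\iota_t^*)\circ(\iota_0^*)^{-1}$, the paper along a path from $0$ to $t_*$). The paper also records that these identifications preserve the complex orientations, hence the integration pairings, which Proposition~\ref{prop:HL} needs for $\int_M\alpha_0^n=\int_{X_{t_*}}\alpha_{t_*}^n$; you could add this one line, which holds because whether $\Phi(\cdot,t)$ respects the complex orientations is locally constant in $t$ and true at $t=0$, where $\Phi(\cdot,0)=\mathrm{id}_M$.
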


\begin{proof}
The relative canonical bundle
\[
 K_{X/\Delta}=K_X\otimes f^*K_\Delta^{-1}
\]
satisfies $K_{X/\Delta}|_{X_t}\simeq K_{X_t}$. Therefore
\[
 a=c_1(K_{X/\Delta})+\tau[\omega]\in H^2(X,\R)
\]
restricts to $\alpha_t$ on $X_t$.

Ehresmann's theorem makes $f$ a differentiable fiber bundle. Choose a
smooth path from $0$ to any required parameter, in particular to
$t_*$. The pullback bundle over that compact path is smoothly trivial.
If $j_r\colon M\to X$, $0\le r\le1$, are its fiber inclusions, the
maps $j_r$ are homotopic. Thus $j_r^*a=j_0^*a$. Each $j_r$ identifies $M$ diffeomorphically with the corresponding
fiber and preserves the complex orientation. These identifications
therefore preserve the real cohomology rings and their integration
pairings. Equivalently, the restrictions of $a$ form a
flat section of $R^2f_*\R$; on the simply connected disc it has no
monodromy. No global shrinking is needed. Nothing in this argument
identifies the Hodge decompositions or the K\"ahler cones.
\end{proof}

\begin{proposition}
\label{prop:HL}
For every integer $0\le k\le n$, the map
\begin{equation}\label{eq:HL}
 \alpha_0^k\smile(\cdot)\colon H^{n-k}(M,\R)\longrightarrow H^{n+k}(M,\R)
\end{equation}
is an isomorphism. Moreover, $\int_M\alpha_0^n>0$.
\end{proposition}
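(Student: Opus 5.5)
The plan is to transport the classical hard Lefschetz theorem from the nef fiber $X_{t_*}$ back to $M=X_0$, using only the topological identification of Lemma~\ref{lem:flatclass}. By \eqref{eq:one-kahler-class}, the class $\alpha_{t_*}=c_1(K_{X_{t_*}})+\tau[\omega_{t_*}]$ lies in $\KK(X_{t_*})$. It is therefore represented by a K\"ahler form on the compact K\"ahler manifold $X_{t_*}$. The classical hard Lefschetz theorem \cite{GH78} then says that for every $0\le k\le n$, cup product with $\alpha_{t_*}^k$ gives an isomorphism $H^{n-k}(X_{t_*},\R)\to H^{n+k}(X_{t_*},\R)$. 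The theorem is stated for the class of a K\"ahler form, so it applies to any K\"ahler class, not only integral ones. The same fiber also gives $\int_{X_{t_*}}\alpha_{t_*}^n>0$, since this integral is the volume of a K\"ahler form.

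Next, choose a smooth path from $0$ to $t_*$ as in the proof of Lemma~\ref{lem:flatclass}. This yields an orientation-preserving diffeomorphism $\phi\colon M\to X_{t_*}$, namely the endpoint fiber identification, with $\phi^*\alpha_{t_*}=\alpha_0$ in $H^2(M,\R)$. This holds because both classes are restrictions of the single global class $a=c_1(K_{X/\Delta})+\tau[\omega]$, and the fiber inclusions along the path are homotopic.

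Since $\phi^*$ is a ring isomorphism on real cohomology, we have $\phi^*(\alpha_{t_*}^k\smile x)=\alpha_0^k\smile\phi^*x$. The Lefschetz map \eqref{eq:HL} is therefore conjugate to the Lefschetz map on $X_{t_*}$ by the isomorphisms $\phi^*$ in degrees $n-k$ and $n+k$, so it is an isomorphism. Likewise, because $\phi$ preserves orientation, $\int_M\alpha_0^n=\int_{X_{t_*}}\alpha_{t_*}^n>0$.

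I expect no real obstacle; the step to watch is the compatibility of the identifications. One must check that $\phi^*$ sends $\alpha_{t_*}$ exactly to $\alpha_0$ in real de Rham cohomology, which requires the relative canonical class rather than fiberwise choices. One must also check that $\phi$ preserves the complex orientation, so that the top-degree pairing keeps its sign. Both facts are supplied by Lemma~\ref{lem:flatclass}. No Hodge-theoretic statement about $M$ is used, and this matters because $\alpha_0$ itself is only nef on $M$.
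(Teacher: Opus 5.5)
Your proposal is correct and is essentially the paper's proof: apply hard Lefschetz to the K\"ahler class $\alpha_{t_*}$ on $X_{t_*}$, then transport both the Lefschetz isomorphisms and the positivity of $\int\alpha_{t_*}^n$ to $M$. The transport uses the orientation-preserving Ehresmann identification of Lemma~\ref{lem:flatclass}, which carries $\alpha_{t_*}$ to $\alpha_0$ because both are restrictions of the global class $c_1(K_{X/\Delta})+\tau[\omega]$; the compatibility checks you single out are exactly the ones the paper delegates to that lemma.
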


\begin{proof}
Apply hard Lefschetz \cite{GH78} to the K\"ahler class $\alpha_{t_*}$ on the
single known nef fiber. Its cup-product maps are isomorphisms.
Lemma~\ref{lem:flatclass} identifies those maps with
\eqref{eq:HL}. The same identification gives
\[
 \int_M\alpha_0^n=\int_{X_{t_*}}\alpha_{t_*}^n>0.
\]
This is an identity for a fixed cohomology class, not a limiting
argument for an arbitrary sequence of K\"ahler classes.
\end{proof}

The following argument is the part of the nef-value method that
detects excessively large contracted subvarieties. Compare
\cite[Proposition~3.1]{Wi98}. We give the proof for a possibly singular
analytic target.

\begin{lemma}
\label{lem:vanishing}
Let $M$ be a compact complex manifold of dimension $n$, let
$g\colon M\to Y$ be a holomorphic map to a K\"ahler space, and put
$\alpha=g^*\beta$, where $\beta$ is a K\"ahler class on $Y$.
If $Z\subset M$ is an irreducible compact analytic subvariety of
dimension $d$ and $e=\dim g(Z)$, then
\begin{equation}\label{eq:cupvanish}
 \alpha^{e+1}\smile\PD[Z]=0
 \quad\text{in }H^{2n-2d+2e+2}(M,\R).
\end{equation}
\end{lemma}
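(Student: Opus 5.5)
We will prove \eqref{eq:cupvanish} by testing against closed forms and showing that a pointwise power of a pulled-back form vanishes on $Z$. Since $M$ is compact, Poincar\'e duality makes the pairing $H^{2n-2d+2e+2}(M,\R)\times H^{2d-2e-2}(M,\R)\to\R$ perfect. It therefore suffices to show
\[
 \int_Z \alpha^{e+1}\wedge\gamma=0
\]
for every smooth closed real form $\gamma$ of degree $2d-2e-2$ on $M$. Here $\int_Z$ denotes integration over the regular part $Z_{\mathrm{reg}}$, which represents $\PD[Z]$ by Lelong's theorem. If $e=d$ there is nothing to prove, since the degree $2e+2$ exceeds $2d$. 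So assume $e<d$.

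The first step is to choose a good representative of $\alpha$. Let $\beta$ be represented by a K\"ahler form $\omega_Y$ on $Y$ in the local-potential sense, and set $\eta=g^*\omega_Y$. Locally $\omega_Y=\ddc\psi$, where $\psi$ is the restriction of a smooth function on an ambient $\C^N$. Then $\eta=\ddc(\psi\circ g)$ is a smooth closed real $(1,1)$-form on $M$ whose class is $\alpha=g^*\beta$. We need this in the de Rham sense, and the Bott--Chern class maps to the de Rham class. Because closed forms can be changed by exact forms without affecting the integrals (Stokes' theorem on $Z$ via Lelong), we may compute $\int_Z\eta^{e+1}\wedge\gamma$.

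The second step, which we expect to be the main point, is the pointwise rank vanishing $\eta^{e+1}|_{Z_{\mathrm{reg}}}=0$. Consider the Zariski open dense set $Z^\circ\subset Z_{\mathrm{reg}}$ where $g|_{Z}$ has maximal rank, namely $e$ by the rank theorem/Remmert. Near a point of $Z^\circ$, the map $g|_{Z^\circ}$ factors locally as a submersion onto an $e$-dimensional complex manifold $W$ (a locally closed piece of $g(Z)$, smooth there) followed by the inclusion $W\hookrightarrow Y\hookrightarrow\C^N$. Thus $\eta|_{Z^\circ}$ is the pullback of the smooth $(1,1)$-form $\ddc\psi|_W$ on the $e$-dimensional manifold $W$. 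Its $(e+1)$-st power is a $(2e+2)$-form on $W$ and hence is zero. Therefore $\eta^{e+1}|_{Z^\circ}=0$. Since $\eta^{e+1}$ is smooth on $M$, its restriction to $Z_{\mathrm{reg}}$ is a smooth form vanishing on a dense open subset, so it vanishes identically.

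The subtle points are that $Y$ may be singular and that $g(Z)$ need not be smooth. Both are handled by working only on the generic smooth locus, combined with the continuity of the smooth form $\eta^{e+1}$ on $Z_{\mathrm{reg}}$. Note also that only local potentials of $\omega_Y$ are needed: the computation is local, and pluriharmonic differences of potentials do not affect $\eta$. It follows that $\int_{Z_{\mathrm{reg}}}\eta^{e+1}\wedge\gamma=0$ for all $\gamma$, which proves \eqref{eq:cupvanish}. Finally, the cup product is well defined on classes, so the specific choice of representative does not matter.
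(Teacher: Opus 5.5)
Your proposal is correct and follows essentially the paper's argument. Both show that $(g^*b)^{e+1}$ vanishes pointwise along $Z$ because the rank is at most $e$ on a dense open set, extend this by continuity of the smooth form, and then integrate against the current of $Z$. The only difference is technical: you work directly on $Z_{\mathrm{reg}}$, using Lelong's theorem and the constant-rank theorem, whereas the paper passes to a resolution $\widetilde Z$, uses the smooth locus of $g(Z)$, and pushes the resulting zero form forward as a current.
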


\begin{proof}
Choose a K\"ahler form $b$ representing $\beta$ on $Y$ in the
local-potential sense. Its pullback $g^*b$ is a smooth closed form on
$M$. Let $\mu\colon\widetilde Z\to Z$ be a resolution and let
$i\colon Z\hookrightarrow M$ be the inclusion. Set
$h=g\circ i\circ\mu$.

On a dense open subset of $\widetilde Z$, the map $h$ has image in the
smooth locus of $g(Z)$ and has differential of complex rank at most
$e$. Hence $(h^*b)^{e+1}=0$ there. Since $h^*b$ is a smooth form on all
of $\widetilde Z$, this identity holds everywhere. Pushing forward
currents and using the projection formula gives
\[
 (g^*b)^{e+1}\wedge[Z]
 =(i\circ\mu)_*\big((h^*b)^{e+1}\big)=0.
\]
Here the expression on the right means pushforward of the indicated
form against the integration current of $\widetilde Z$; the resolution
has generic degree one, so $(i\circ\mu)_*[\widetilde Z]=[Z]$.
Passing to de Rham cohomology proves \eqref{eq:cupvanish}.
\end{proof}

\begin{proposition}
\label{prop:semismall}
Let $M$ be a connected compact K\"ahler manifold of dimension $n$,
and suppose $\alpha=g^*\beta$ satisfies all the hard Lefschetz
isomorphisms \eqref{eq:HL}. Then every irreducible compact analytic
subvariety $Z\subset M$ satisfies
\begin{equation}\label{eq:semismall}
 2\dim Z-\dim g(Z)\le n.
\end{equation}
\end{proposition}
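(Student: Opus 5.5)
We argue by contradiction, combining Lemma~\ref{lem:vanishing} with the injectivity part of the hard Lefschetz isomorphisms \eqref{eq:HL}. Let $Z\subset M$ be irreducible of dimension $d$, put $e=\dim g(Z)$, and suppose $2d-e>n$. The class $\PD[Z]$ lies in $H^{2n-2d}(M,\R)$. It is nonzero, since pairing with a power of any K\"ahler class $[\omega]^d$ gives $\int_Z\omega^d>0$; in fact $\PD[Z]$ is nonzero in real cohomology because $M$ is K\"ahler.

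Set $k=d-e-1$ and write $2n-2d=n-k'$ with $k'=2d-n$. The hypothesis $2d-e>n$ means $e+1\le 2d-n=k'$. Moreover $0\le k'\le n$, since $2d\ge n+e+1>n$ and $d\le n$. By \eqref{eq:HL} with $k=k'$, the map
\[
\alpha^{k'}\smile(\cdot)\colon H^{2n-2d}(M,\R)=H^{n-k'}(M,\R)\to H^{n+k'}(M,\R)
\]
is an isomorphism, hence injective, so $\alpha^{k'}\smile\PD[Z]\neq0$. On the other hand,
\[
\alpha^{k'}\smile\PD[Z]=\alpha^{k'-e-1}\smile\bigl(\alpha^{e+1}\smile\PD[Z]\bigr)=0
\]
by Lemma~\ref{lem:vanishing}, because $k'-e-1\ge0$. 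This contradiction proves \eqref{eq:semismall}.

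Two bookkeeping points need checking. First, the exponent must satisfy $k'\le n$ for \eqref{eq:HL} to apply, which holds because $d\le n$. Second, the degree $2n-2d$ must be written as $n-k'$ with $k'\ge 0$. The case $d<n/2$ makes $k'<0$, but there the inequality $2d-e\le n$ holds trivially since $e\ge0$. So the only real input is the nonvanishing of $\PD[Z]$ together with the injectivity of $\alpha^{2d-n}\smile(\cdot)$. I expect no serious obstacle. The most delicate point is justifying that $\PD[Z]\neq0$ in real cohomology, which follows from the positive pairing of $[Z]$ with $\omega^d$ for a K\"ahler form $\omega$ on $M$.
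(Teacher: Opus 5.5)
Your proof is correct and is the paper's argument: set $k=2d-n$, observe that $\PD[Z]\neq0$ by pairing with $[\omega]^d$, and contradict the injectivity of $\alpha^{k}\smile(\cdot)$ on $H^{n-k}(M,\R)$ using Lemma~\ref{lem:vanishing}, since $k\ge e+1$. The auxiliary definition $k=d-e-1$ in your second paragraph is never used and can be deleted.
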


\begin{proof}
Write $d=\dim Z$ and $e=\dim g(Z)$. Suppose $2d-e>n$, and set
\[
 k=2d-n.
\]
Then $1\le k\le n$ and $k>e$. The class
\[
 \zeta=\PD[Z]\in H^{2n-2d}(M,\R)=H^{n-k}(M,\R)
\]
is nonzero. Indeed, for any K\"ahler form $\omega_M$ on $M$,
\[
 \int_M\zeta\smile[\omega_M]^d=\int_Z\omega_M^d>0.
\]
Lemma~\ref{lem:vanishing} implies
$\alpha^k\smile\zeta=0$, since $k\ge e+1$. This contradicts the
injectivity of the hard Lefschetz map in degree $n-k$.
\end{proof}

\begin{remark}\label{rem:birational}
Taking $Z=M$ in \eqref{eq:semismall} gives $\dim g(M)=n$.
Thus a contraction satisfying its hypotheses is generically finite;
if it has connected fibers and normal target, it is bimeromorphic.
In particular, the contraction arising in our proof cannot be of fiber
type. This observation is not an additional assumption on the
contraction.
\end{remark}

\section{A projective contraction and minimal rational curves}
\label{sec:contraction}

Continue under the assumption that $K_M$ is not nef. Applying
Theorem~\ref{ext:HX} to $\eta=\tau[\omega_0]$ gives
\begin{equation}\label{eq:contraction}
 g\colon M\longrightarrow Y,
 \qquad \alpha_0=g^*\beta,
\end{equation}
where $g$ is projective and $Y$ is normal and compact K\"ahler.
The contraction is not an isomorphism, since otherwise $\alpha_0$
would be K\"ahler. It therefore has a positive-dimensional fiber:
a finite contraction with $g_*\OO_M=\OO_Y$ is an isomorphism.
Propositions~\ref{prop:HL} and \ref{prop:semismall} apply to $g$.

\begin{lemma}
\label{lem:nullcurve}
For an irreducible curve $D\subset M$,
\[
 \alpha_0\cdot D=0\quad\Longleftrightarrow\quad g(D)\text{ is a point}.
\]
Every contracted curve satisfies
\begin{equation}\label{eq:degree-area}
 -K_M\cdot D=\tau\int_D\omega_0>0.
\end{equation}
Moreover, $L=-K_M$ is $g$-ample.
\end{lemma}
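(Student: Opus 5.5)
The plan is to use the pullback identity $\alpha_0=g^*\beta$ from \eqref{eq:contraction} at the level of forms, integrate over $D$, and then read off the degree formula and relative ampleness from the definition $\alpha_0=c_1(K_M)+\tau[\omega_0]$.

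\textbf{The equivalence.} Choose a K\"ahler form $b$ on $Y$ representing $\beta$ in the local-potential sense. Since \eqref{eq:HX} holds in Bott--Chern cohomology, and integration of a closed form over a compact curve depends only on the de Rham class, we get
\[
 \alpha_0\cdot D=\int_D g^*b=\int_{\widetilde D}h^*b,
\]
where $\nu\colon\widetilde D\to D$ is the normalization and $h=g\circ\nu$.

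If $g(D)$ is a point, then $h$ is constant. Hence $h^*b=0$ and the degree vanishes.

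If $g(D)$ is a curve, then $h$ is a nonconstant map from a compact Riemann surface to $Y$. At a point where $h$ is an immersion into the smooth locus of $g(D)$, strict plurisubharmonicity of the local potentials makes $h^*b$ strictly positive. Elsewhere $h^*b\ge0$, since pullbacks of positive $(1,1)$-forms are semipositive. Therefore $\int_{\widetilde D}h^*b>0$.

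\textbf{The degree formula.} For a contracted curve, expanding $\alpha_0\cdot D=0$ gives
\[
 K_M\cdot D+\tau\int_D\omega_0=0.
\]
This is exactly \eqref{eq:degree-area}. Positivity follows from $\tau>0$ (Lemma~\ref{lem:threshold}) and $\int_D\omega_0>0$.

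\textbf{Relative ampleness.} The map $g$ is projective, so I would use the relative Kleiman criterion for projective morphisms: a line bundle is $g$-ample if and only if it is positive on the relative closed cone $\overline{NE}(M/Y)$ minus $0$. This cone is finite-dimensional over a compact base, as in \cite[Remark~1.6(iii)]{Fuj23}, where the relative Kleiman criterion is also available.

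On classes of contracted curves we have $L\cdot\gamma=\tau[\omega_0]\cdot\gamma$. Moreover $[\omega_0]$ pairs strictly positively with every nonzero element of $\overline{NE}(M/Y)$. To see this, fix a $g$-ample line bundle $H$. Its degree on contracted curves is bounded by a multiple of the $\omega_0$-area, and the $\omega_0$-area is bounded by a multiple of the $H$-degree, because a relatively ample class is positive on the closed relative cone. Comparing these two functionals on the closed cone gives positivity of $[\omega_0]$ there.

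Alternatively, and more cleanly, I would argue through a Chern form. The class $c_1(L)$ restricted to each fiber $F$ of $g$ equals $\tau[\omega_0]|_F$, because $\alpha_0|_F=0$. So $L|_F$ is ample on every fiber by Kodaira's criterion, since $\omega_0|_F$ is K\"ahler and $F$ is projective. Ampleness of $L$ on every fiber, for a proper (projective) morphism, implies $g$-ampleness by the standard fiberwise criterion. Note that $g$ is flat only generically, so I would use the version of the criterion (as in Grauert's or Fujino's analytic setting) that does not require flatness.

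\textbf{Main obstacle.} The hardest step is justifying relative ampleness in the analytic setting, namely the fiberwise criterion for projective morphisms of complex spaces when the target $Y$ is singular. The positivity computation for curves in $Y$, which uses only local potentials, is routine.
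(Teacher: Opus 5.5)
Your proof of the equivalence and of \eqref{eq:degree-area} is the paper's argument, with a little more detail on the pointwise positivity of $h^*b$. The proposal is correct, and only the relative ampleness step differs from the paper.

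\textbf{The paper's route.} It argues fiber by fiber and purely cohomologically. On each projective fiber $F$, and for each positive-dimensional $W\subset F_{\red}$ of dimension $q$, the vanishing $g^*\beta|_W=0$ gives $c_1(L)^q\cdot[W]=\tau^q\int_W\omega_0^q>0$. Nakai--Moishezon then makes $L|_{F_{\red}}$ ample, nilpotents do not affect ampleness, and \cite[Lemma~3.3]{Fuj23} globalizes.

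\textbf{Your Kleiman route.} This is genuinely different and it works. The decisive inequality is only your first one, $H\cdot\gamma\le c\int_\gamma\omega_0$, which comes from a pointwise bound of a Chern form of $H$ by $c\,\omega_0$. It passes to $\overline{NE}(M/Y)$ because on contracted classes $[\omega_0]$ coincides with the line-bundle functional $\tau^{-1}L$. Positivity of $H$ on the closed cone then gives $L\cdot\gamma\ge(\tau/c)\,H\cdot\gamma>0$ for $\gamma\ne0$, so the reverse bound you mention is not needed. This route avoids singular and non-reduced fibers altogether. It costs finite-dimensionality of $N_1(M/Y)$ (\cite[Remark~1.6(iii)]{Fuj23}) and the analytic relative Kleiman criterion.

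\textbf{Your Kodaira variant.} It has the paper's structure, fiberwise ampleness followed by globalization, but as written it is too quick.
\begin{itemize}
\item The fibers of $g$ may be singular and non-reduced, so the classical Kodaira theorem does not apply.
\item On a singular $F$, a de Rham identity $c_1(L)|_F=\tau[\omega_0|_F]$ does not by itself give a positively curved metric.
\end{itemize}
It can be repaired because \eqref{eq:HX} holds in Bott--Chern cohomology. Write $c_1(K_M,h)+\tau\omega_0=g^*b+dd^cu$ on $M$, and $b=dd^c\psi$ on a small neighbourhood $V$ of $y$. Then twisting the dual metric on $L$ by $e^{\pm(u+\psi\circ g)}$ gives curvature exactly $\tau\omega_0$ on $g^{-1}(V)$. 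Grauert's version of the Kodaira criterion on $F_{\red}$ then gives ampleness.

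\textbf{What each route buys.} The paper's Nakai--Moishezon argument sidesteps both metrics and the cone. It needs only the vanishing of $g^*\beta$ on subvarieties of a single fiber.
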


\begin{proof}
If $g(D)$ is a point, the pullback class $g^*\beta$ has degree zero on
$D$. If $g(D)$ is a curve, the map from the normalization of $D$ to
$g(D)$ has positive generic degree, so the integral of the pullback
of a K\"ahler form representing $\beta$ is strictly positive.
This proves the equivalence. Equation~\eqref{eq:degree-area} follows
from \eqref{eq:contraction} and the definition of $\alpha_0$.

For completeness, relative ampleness is stronger than strict
positivity on individual curves, so it deserves a separate argument.
Every fiber $F=g^{-1}(y)$ is projective. For every positive-dimensional
irreducible subvariety $W\subset F_{\red}$, with $q=\dim W$, the
restriction of $g^*\beta$ to $W$ is zero. Therefore
\begin{equation}\label{eq:nakai}
 c_1(L)^q\cdot[W]=\tau^q\int_W\omega_0^q>0.
\end{equation}
The Nakai--Moishezon criterion on the projective complex space $F_{\red}$ shows
that $L|_{F_{\red}}$ is ample. Ampleness is unaffected by nilpotents,
so $L|_F$ is ample as well. Fiberwise ampleness implies relative
ampleness in a neighborhood of each $y$; the precise analytic
statement is \cite[Lemma~3.3]{Fuj23}. Hence $L$ is $g$-ample.
\end{proof}

\begin{lemma}
\label{lem:minimal}
There is a $g$-contracted rational curve. Consequently the integer
\begin{equation}\label{eq:ell}
 \ell=\min\{-K_M\cdot C:
 C\subset M\text{ is rational and }g(C)\text{ is a point}\}
\end{equation}
is defined and positive.
\end{lemma}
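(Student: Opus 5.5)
The plan is to apply Theorem~\ref{ext:cone} to the projective contraction $g\colon M\to Y$ of \eqref{eq:contraction}, and then to check that the minimum in \eqref{eq:ell} is taken over a nonempty set of positive integers.

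\textbf{Step 1: $K_M$ is not $g$-nef.} By Lemma~\ref{lem:nullcurve}, $L=-K_M$ is $g$-ample. The contraction $g$ has a positive-dimensional fiber $F$, since $g$ is not an isomorphism. Choose an irreducible curve $D\subset F$. Then $g(D)$ is a point, and \eqref{eq:degree-area} gives
\[
 K_M\cdot D=-\tau\int_D\omega_0<0.
\]
So $K_M$ is not relatively nef over $Y$.

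\textbf{Step 2: produce a contracted rational curve.} The morphism $g$ is projective, $M$ is smooth, and $Y$ is compact. Theorem~\ref{ext:cone} therefore yields a $g$-contracted rational curve $C$ with $0<-K_M\cdot C\le 2n$. In particular, the set in \eqref{eq:ell} is nonempty.

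\textbf{Step 3: the minimum exists and is positive.} Let $C$ be any rational curve with $g(C)$ a point. By \eqref{eq:degree-area}, $-K_M\cdot C=\tau\int_C\omega_0$, which is strictly positive because $\omega_0$ is K\"ahler. Since $K_M$ is a line bundle, this degree is an integer, so it lies in $\Z_{>0}$. A nonempty subset of the positive integers has a minimum, so $\ell$ is defined and $\ell\ge1$; moreover $\ell\le 2n$ by Step~2.

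The only substantive step is Step~2, where the relative cone theorem is invoked in the analytic projective setting. The hypotheses needed there, finite-dimensionality of the relative numerical space over a compact target, were already discussed after Theorem~\ref{ext:cone}. So the main obstacle reduces to confirming that $g$, supplied by Theorem~\ref{ext:HX}, is genuinely projective in Fujino's sense, which is exactly the projectivity clause recorded there.
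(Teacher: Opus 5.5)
Your proposal matches the paper's proof: a curve in a positive-dimensional fiber has negative $K_M$-degree by \eqref{eq:degree-area}, Theorem~\ref{ext:cone} then produces a $g$-contracted rational curve, and integrality plus positivity of these degrees gives the minimum. One point to make explicit is that the curve $D\subset F$ exists because $F$ is projective (for instance, cut a positive-dimensional component down by hyperplane sections), since a positive-dimensional compact analytic space need not contain any curve in general.
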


\begin{proof}
A positive-dimensional projective fiber contains an irreducible
curve $D$, obtained, for example, by successive hyperplane sections
of one of its positive-dimensional components.
Equation~\eqref{eq:degree-area} gives $K_M\cdot D<0$.
Thus $K_M$ is not relatively nef for $g$. Theorem~\ref{ext:cone}
gives a contracted rational curve with positive anticanonical degree.
All such degrees are positive integers because $K_M$ is a line
bundle. A nonempty set of positive integers has a minimum.
\end{proof}

Fix a curve $C$ attaining \eqref{eq:ell}, and let
\begin{equation}\label{eq:nu}
 \nu\colon\Pp^1\longrightarrow C\subset M
\end{equation}
be its normalization.

Let $\mathcal H\subset\Hom(\Pp^1,M)$ be the locus, with its reduced
structure, of holomorphic maps $u\colon\Pp^1\to M$ with
$\deg u^*L=\ell$ and $g\circ u$ constant.

\begin{lemma}
\label{lem:unsplit}
Every map $u\colon\Pp^1\to M$ with $\deg u^*L=\ell$ and $g\circ u$
constant is birational onto its image and has trivial automorphism
group.
\end{lemma}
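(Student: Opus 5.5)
Since $u$ is nonconstant (its $L$-degree is $\ell>0$), it factors as $u=\nu_C\circ\pi$, where $C=u(\Pp^1)$ is an irreducible rational curve, $\nu_C\colon\Pp^1\to C$ is its normalization, and $\pi\colon\Pp^1\to\Pp^1$ is a finite map of degree $m\ge1$. We will show $m=1$, and then deduce that the automorphism group is trivial.

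For the degree, note that $g(C)=g(u(\Pp^1))$ is a point, so $C$ is a rational $g$-contracted curve. The projection formula gives
\[
 \ell=\deg u^*L=m\,\deg\nu_C^*L=m\,(-K_M\cdot C).
\]
By Lemma~\ref{lem:nullcurve}, $-K_M\cdot C=\tau\int_C\omega_0>0$. Moreover $-K_M\cdot C$ is an integer, since $K_M$ is a line bundle, and it lies in the set defining $\ell$ in \eqref{eq:ell}. Minimality gives $-K_M\cdot C\ge\ell$, so $\ell\ge m\ell$. Because $\ell>0$, this forces $m=1$. Hence $\pi$ is an automorphism of $\Pp^1$, and $u$ is the normalization of $C$ up to reparametrization; in particular $u$ is birational onto its image.

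For the automorphisms, here "automorphism group of $u$" means the stabilizer of $u$ under the reparametrization action, namely
\[
 \{\phi\in\PGL_2(\C):u\circ\phi=u\}.
\]
This is the group relevant for the free action in Appendix~\ref{app:moduli}. Since $u$ is birational onto $C$, there is a nonempty Zariski open set $U\subset\Pp^1$ on which $u$ is injective; one can take the complement of the finitely many preimages of the singular points of $C$. Suppose $u\circ\phi=u$. For $z\in U\cap\phi^{-1}(U)$, which is still cofinite and hence nonempty, we have $u(\phi(z))=u(z)$ with both $z$ and $\phi(z)$ in $U$, so injectivity gives $\phi(z)=z$. Thus $\phi$ agrees with the identity on an infinite set, and $\phi=\mathrm{id}$.

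The only point needing care is that the anticanonical degree of the image curve is a positive integer that is at least $\ell$. This uses two facts: every contracted curve has strictly positive degree by \eqref{eq:degree-area}, and the minimum in \eqref{eq:ell} is taken over all contracted rational curves, not just $C$. With both in hand, no multiple cover can occur. Everything else is routine.
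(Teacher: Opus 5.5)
Your proposal is correct and follows the same approach as the paper. Minimality of $\ell$ gives $\ell=m(L\cdot C)\ge m\ell$, which forces the generic mapping degree $m=1$. Then any $\phi$ with $u\circ\phi=u$ fixes a dense (cofinite) subset of $\Pp^1$ and is therefore the identity.
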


\begin{proof}
Let $u\colon\Pp^1\to M$ be as in the statement. Since $\ell>0$, the map
$u$ is nonconstant; let $C'$ be its reduced image and $m$ its generic
mapping degree. Then $C'$ is an irreducible rational curve contracted
by $g$, so minimality of $\ell$ gives $L\cdot C'\ge\ell$. Hence
\begin{equation}\label{eq:nosplitting}
 \ell=\deg u^*L=m(L\cdot C')\ge m\ell,
\end{equation}
so $m=1$: the map $u$ is birational onto its image.

A birational map from $\Pp^1$ is its image's normalization. Any
automorphism $\sigma\in\Aut(\Pp^1)$ satisfying $u\circ\sigma=u$ fixes
a dense open subset of $\Pp^1$, and hence is the identity.
\end{proof}

Let
\begin{equation}\label{eq:relative-moduli}
 \Mspace=\mathcal H/\PGL_2(\C)
\end{equation}
denote the orbit space for reparametrization.  By
Proposition~\ref{prop:minimal-moduli} in Appendix~\ref{app:moduli},
the action is free and proper, $\Mspace$ is a compact complex analytic
space with a universal smooth family of rational curves and a
holomorphic evaluation map, and every fiber of $\Mspace\to Y$ is
projective.  It is nonempty by Lemma~\ref{lem:minimal}.

Let $V\subset\Mspace$ be any irreducible component, with its reduced
structure. Pulling back the universal family gives
\begin{equation}\label{eq:universal}
 p\colon U\longrightarrow V,
 \qquad q\colon U\longrightarrow M,
 \qquad a\colon V\longrightarrow Y,
 \qquad g\circ q=a\circ p.
\end{equation}
The map $p$ is smooth and proper with fibers $\Pp^1$.
Since $V$ is irreducible and the fibers are connected and irreducible,
$U$ is irreducible. For $r=\dim V$, we have $\dim U=r+1$.
The restriction $q|_{p^{-1}(v)}$ is the normalization of the curve
$C_v$ represented by $v$.

We write
\[
 Z=\Locus(V):=q(U).
\]
The space $U$ is compact, so Remmert's theorem makes $Z$ an
irreducible compact analytic subvariety of $M$.

For $x\in M$ set
\[
 V_x=\{v\in V:x\in C_v\},
 \qquad
 V_{x,z}=V_x\cap V_z.
\]
These are closed analytic subspaces: for example,
$V_x=p(q^{-1}(x))$ is analytic by properness. Since each curve is
normalized by its fiber of $p$, the map $q^{-1}(x)\to V_x$ is finite
and surjective.

\begin{lemma}\label{lem:two-points}
For distinct points $x,z\in M$, the set $V_{x,z}$ is finite.
\end{lemma}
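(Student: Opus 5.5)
This is bend-and-break for unsplit families, localized to a single contraction fiber. Suppose $V_{x,z}$ is infinite for some $x\ne z$. Being a closed analytic subspace of the compact space $V$, it then contains an irreducible compact curve $T\subset V_{x,z}$. If $V_{x,z}$ is nonempty, some curve $C_v$ contains both $x$ and $z$. Because $g$ contracts $C_v$, we get $g(x)=g(z)=:y$, and $a(T)=y$. So $T$ lies in the fiber $a^{-1}(y)$ of $\Mspace\to Y$, which is projective by Proposition~\ref{prop:minimal-moduli}. Hence $T$ is a projective curve, and after normalizing we may take $T$ to be a smooth projective curve with a finite map $T\to V$. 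Pulling back $p\colon U\to V$ gives a $\Pp^1$-bundle $S=U\times_V T\to T$. This $S$ is a smooth projective ruled surface (over a curve any analytic $\Pp^1$-bundle is algebraic by GAGA). It carries a map $\phi=q|_S\colon S\to M$ whose image lies in the projective fiber $g^{-1}(y)$.

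Next I construct two sections. The set $q^{-1}(x)\cap S$ is analytic in $S$, and it maps finitely and surjectively onto $T$ because each $C_t$ passes through $x$ and $q^{-1}(x)\to V_x$ is finite. After a finite base change $T'\to T$ (replacing $S$ by the pullback ruled surface), I may assume it contains a section $\sigma_x$ with $\phi(\sigma_x)=x$, and likewise a section $\sigma_z$ with $\phi(\sigma_z)=z$. These sections are distinct, since $x\neq z$. Both are contracted to points by $\phi$.

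The key step is the standard computation. $\phi$ is nonconstant on fibers, because each fiber is birational onto $C_t$ by Lemma~\ref{lem:unsplit}. Also $\phi$ does not contract $S$ to a curve. If it did, all $C_t$ would equal one fixed curve $C$. Each $C_t$ is normalized by its fiber, so every $t$ would give the same point of $\Mspace$ ($\Mspace$ is the quotient by reparametrization). Then $T\to V$ would be constant, a contradiction. Hence $\phi$ is generically finite onto a surface, so $\phi^*H$ is nef and big for an ample $H$ on $g^{-1}(y)$. A contracted curve $\Sigma$ satisfies $\Sigma\cdot\phi^*H=0$, so the Hodge index theorem gives $\Sigma^2<0$. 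Write $F$ for the fiber class and $\sigma_z\equiv\sigma_x+kF$. Then $\sigma_x\cdot\sigma_z=\sigma_x^2+k$ and $\sigma_z^2=\sigma_x^2+2k$. Using $\sigma_x\cdot\sigma_z\ge0$ and $\sigma_x^2,\sigma_z^2<0$, I get $k\ge-\sigma_x^2>0$, and therefore $\sigma_z^2=\sigma_x^2+2k\ge -\sigma_x^2>0$. This contradicts $\sigma_z^2<0$.

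Alternatively, one can derive a degeneration into a reducible or non-reduced cycle (Mori's bend-and-break), which is forbidden by minimality of $\ell$ through the argument of \eqref{eq:nosplitting}. The ruled-surface computation avoids this. The main obstacle is the algebraization step. I must make sure that $T$, the ruled surface $S$, and the map $\phi$ are genuinely projective, so that the Hodge index theorem applies. This is exactly where projectivity of the fibers of $\Mspace\to Y$ and of $g^{-1}(y)$ is used. A second point needing care is that $\phi$ does not collapse $S$ to a curve, which requires the injectivity of curve $\mapsto$ point of $\Mspace$.
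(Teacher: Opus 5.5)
Your argument is correct and is essentially the paper's proof. You place $V_{x,z}$ in the projective fiber of $\Mspace\to Y$ over $g(x)$, pull the universal family back to a normalized curve to get a ruled surface, and base change to obtain sections $\sigma_x,\sigma_z$. You then exclude collapse to a curve via the injectivity of curves into $\Mspace$, and combine Hodge index ($\sigma_x^2,\sigma_z^2<0$) with $\sigma_z\equiv\sigma_x+kF$. Using $\sigma_x\cdot\sigma_z\ge0$ in place of the paper's disjointness $\sigma_x\cdot\sigma_z=0$ works equally well.

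One small point of order: deduce the existence of a compact curve $T\subset V_{x,z}$ from the projectivity of $a^{-1}(y)$, as the paper does, not from compactness of $V$ alone. A positive-dimensional compact complex space need not contain any curves.
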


\begin{proof}
There is nothing to prove if $V_{x,z}$ is empty. Otherwise all its
members are contained in the fixed projective fiber
$F=g^{-1}(g(x))$, and necessarily $g(z)=g(x)$.
Moreover, $V_{x,z}$ is a closed analytic subspace of the projective
moduli space $\Mspace_{g(x)}$. It is therefore projective.

Suppose it had positive dimension. Choose an irreducible projective
curve in $V_{x,z}$ and normalize it; call the resulting smooth
projective curve $B$. Its map to $\Mspace$ is nonconstant. Pulling
back the universal family gives a smooth proper $\Pp^1$-family
\[
 p_B\colon S_B\longrightarrow B
\]
and a morphism $e_B\colon S_B\to F$. The inverse images of $x$ and
$z$ are finite and surjective over $B$. After taking an irreducible
component of their fiber product dominating $B$ and normalizing, we obtain
a finite surjective base change $B'\to B$ for which the pulled-back
family $p'\colon S'\to B'$ has two sections $\sigma_x$ and $\sigma_z$.
They are disjoint because they map to distinct points $x$ and $z$.

The surface $S'$ is a ruled surface. Indeed, the divisor of either
section has degree one on the fibers. Its direct image is a rank-two
vector bundle $E$ on $B'$, because the fiberwise $H^1$ vanishes and
$h^0=2$. The evaluation map defines an isomorphism
$S'\simeq\Pp(E)$, as can be checked on each fiber. The holomorphic
bundle $E$ on the smooth projective curve $B'$ is algebraic by GAGA,
so $S'$ is smooth and projective. Let
$e'\colon S'\to F$ be the evaluation morphism.

Its image $T=e'(S')$ is a surface. If it were a curve, the images of
all fibers of $p'$ would be the same irreducible curve: a projective
curve has only finitely many irreducible components, and
the parameter curve is irreducible. Each image has a unique
normalization map up to isomorphism, so the map $B'\to\Mspace$ would
be constant, a contradiction. Therefore $e'\colon S'\to T$ is
generically finite.

Choose an ample line bundle $H$ on $F$ and put $D=c_1(e'^*H)$.
Then $D$ is nef and
\[
 D^2=\deg(e'\colon S'\to T)\,(H^2\cdot[T])>0.
\]
Both sections are contracted by $e'$, hence
\[
 D\cdot\sigma_x=D\cdot\sigma_z=0.
\]
The Hodge index theorem implies
\begin{equation}\label{eq:negative-sections}
 \sigma_x^2<0,\qquad \sigma_z^2<0,
\end{equation}
since neither section is numerically zero: each meets a ruling fiber
in one point.

On a ruled surface, two sections differ numerically by a multiple of
a ruling fiber $F_p$. To verify the point being used, the line bundle
$\OO_{S'}(\sigma_z-\sigma_x)$ has degree zero on every ruling fiber,
and its restriction there is therefore trivial. Its direct image is
a line bundle $Q$ on $B'$; cohomology and base change and the evaluation
map identify $\OO_{S'}(\sigma_z-\sigma_x)$ with $p'^*Q$. Numerically
$p'^*Q$ is $\deg(Q)$ times a ruling fiber. Thus
\[
 \sigma_z\equiv\sigma_x+bF_p
\]
for some real number $b$, with $F_p^2=0$ and
$\sigma_x\cdot F_p=1$. Disjointness gives
$0=\sigma_x\cdot\sigma_z=\sigma_x^2+b$. Consequently
\[
 \sigma_z^2=\sigma_x^2+2b=-\sigma_x^2>0,
\]
contrary to \eqref{eq:negative-sections}. This proves that $V_{x,z}$
is zero-dimensional. Its compactness then makes it finite.
\end{proof}

This is the required form of bend-and-break, proved entirely inside
a projective contraction fiber. Compare \cite{Kol96} and
\cite[Section~3]{Wi98}. Neither $M$ nor $V$ is assumed projective.

\begin{proposition}\label{prop:incidence}
Let $V$, $U$, $Z$ be as above, and set
\[
 r=\dim V,\qquad d=\dim Z,\qquad e=\dim g(Z).
\]
Then
\begin{equation}\label{eq:incidence}
 r+2\le 2d-e.
\end{equation}
\end{proposition}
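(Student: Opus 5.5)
The plan is the standard incidence-variety dimension count, run inside the universal family $U$ of \eqref{eq:universal} and localized to a fiber of $g$ through Lemma~\ref{lem:two-points}.

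\emph{Step 1 (fibers of $q$).} For a general point $x\in Z$, the fiber $q^{-1}(x)\subset U$ maps finitely and surjectively onto $V_x$, because each curve $C_v$ is normalized by $p^{-1}(v)$. Since $U$ is irreducible and $q\colon U\to Z$ is surjective, the generic fiber dimension gives
\[
 \dim V_x\ \ge\ \dim U-\dim Z=r+1-d
\]
for general $x\in Z$. In fact this holds for every $x\in Z$, by upper semicontinuity of fiber dimension.

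\emph{Step 2 (the locus of curves through $x$).} Fix such a general $x$ and a component $W$ of $V_x$ with $\dim W\ge r+1-d$. Let $U_W=p^{-1}(W)$, which has dimension $\dim W+1$, and put $Z_x=q(U_W)$. All curves $C_v$ with $v\in W$ pass through $x$ and are contracted by $g$, so $Z_x\subset Z\cap g^{-1}(g(x))$.

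I will show that $q|_{U_W}$ is generically finite off $q^{-1}(x)$. If $z\ne x$ lies in $Z_x$, then the fiber of $U_W\to Z_x$ over $z$ maps finitely to $V_{x,z}\cap W$. By Lemma~\ref{lem:two-points} the set $V_{x,z}$ is finite, so this fiber is finite. The locus $q^{-1}(x)\cap U_W$ is a proper closed subset of the irreducible components of $U_W$ that dominate $W$, since each $\Pp^1$-fiber meets it in finitely many points. Hence the generic fiber is finite, and
\[
 \dim Z_x=\dim W+1\ \ge\ r+2-d.
\]
The one subtlety is to take a component of $U_W$ dominating $W$, which is irreducible of dimension $\dim W+1$. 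This is fine because $p$ is a $\Pp^1$-bundle.

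\emph{Step 3 (comparison with the fiber of $g|_Z$).} For general $x\in Z$, the fiber $Z\cap g^{-1}(g(x))$ has dimension $d-e$, again by generic fiber dimension for the surjection $Z\to g(Z)$. We may assume $x$ general for both Step 1 and this step, since each condition holds on a dense Zariski-open subset of the irreducible $Z$. Combining the two bounds,
\[
 r+2-d\ \le\ \dim Z_x\ \le\ d-e,
\]
which gives $r+2\le 2d-e$, as claimed in \eqref{eq:incidence}.

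The main obstacle is Step 2: the finiteness of $V_{x,z}$ (bend-and-break) has to control the fibers of $U_W\to Z_x$ away from $x$. Lemma~\ref{lem:two-points} supplies exactly this, and beyond it I would only take care to work with a dominating irreducible component of $U_W$ so that the fiber-dimension theorem applies.
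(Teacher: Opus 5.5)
Your proposal is correct and follows the paper's proof essentially step for step. You pick $x\in Z$ general for $g|_Z$, take a top-dimensional component of the curves through $x$ (the paper's $B$, your $W$), use Lemma~\ref{lem:two-points} to show the evaluation map on its $\Pp^1$-family is finite away from $q^{-1}(x)$, and compare $r+2-d$ with $\dim(Z\cap g^{-1}(g(x)))=d-e$. The only cosmetic difference is that you use the lower bound $\dim V_x\ge r+1-d$, valid at every point of $Z$, where the paper chooses $x$ with exact generic fiber dimension $r+1-d$; only the inequality is needed.
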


\begin{proof}
Choose a general point $x\in Z$ so that both of the following hold:
\[
 \dim q^{-1}(x)=r+1-d,
 \qquad
 \dim\bigl(Z\cap g^{-1}(g(x))\bigr)=d-e.
\]
Such points exist by the fiber-dimension theorem, applied to the two
proper surjective maps $q\colon U\to Z$ and $g|_Z\colon Z\to g(Z)$.
More explicitly, intersect the open set of minimal fiber dimension
for $q$ with the inverse image of the corresponding open set in
$g(Z)$.

Take an irreducible component $A$ of $q^{-1}(x)$ with dimension
$r+1-d$, and set $B=p(A)\subset V_x$. Since $A\to B$ is finite,
\[
 \dim B=r+1-d.
\]
The family $p^{-1}(B)\to B$ is again a smooth $\Pp^1$-family, with
irreducible total space of dimension $r+2-d$. Denote its swept locus
by $W=q(p^{-1}(B))$.

For every $z\in W\setminus\{x\}$, there are only finitely many
curves from $B$ through $z$, by Lemma~\ref{lem:two-points}. Each such
curve has only finitely many preimages of $z$ on its normalization.
Thus the evaluation morphism $p^{-1}(B)\to W$ has finite fibers away
from $x$. Since its image contains a nonconstant curve, it is
generically finite. Hence
\[
 \dim W=\dim p^{-1}(B)=r+2-d.
\]
Every curve in $B$ passes through $x$ and is contracted by $g$, so
\[
 W\subset Z\cap g^{-1}(g(x)).
\]
It follows that $r+2-d\le d-e$, which is \eqref{eq:incidence}.
\end{proof}

\section{Deformation theory in the total space}\label{sec:deformation}
The following estimates of dimensions had been stated in \cite[1.17 Remark]{Kol96}. We include here a proof for convenience.
\begin{lemma}
\label{lem:kuranishi}
Let $Q$ be a complex manifold and $u\colon\Pp^1\to Q$ a holomorphic
map. The local analytic space of holomorphic maps satisfies
\begin{equation}\label{eq:deformation-estimate}
 \dim_{[u]}\Hom(\Pp^1,Q)
 \ge h^0(\Pp^1,u^*T_Q)-h^1(\Pp^1,u^*T_Q)
 =\dim Q+\deg u^*T_Q.
\end{equation}
This statement does not require $Q$ to be compact or algebraic.
\end{lemma}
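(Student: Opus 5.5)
The plan is to use the standard Kuranishi-type obstruction theory for the space of maps, in its analytic form. The Zariski tangent space to $\Hom(\Pp^1,Q)$ at $[u]$ is $H^0(\Pp^1,u^*T_Q)$, and obstructions lie in $H^1(\Pp^1,u^*T_Q)$. Once we have shown that the germ of $\Hom(\Pp^1,Q)$ at $[u]$ is isomorphic to the zero locus of a holomorphic map germ
\[
 \kappa\colon (\C^{h^0},0)\longrightarrow(\C^{h^1},0),\qquad h^i=h^i(\Pp^1,u^*T_Q),
\]
the dimension bound $\dim_{[u]}\ge h^0-h^1$ follows from Krull's principal ideal theorem: the zero set of $h^1$ holomorphic functions in $\C^{h^0}$ has every component of dimension at least $h^0-h^1$. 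The equality $h^0-h^1=\dim Q+\deg u^*T_Q$ is Riemann--Roch on $\Pp^1$, $\chi(\Pp^1,E)=\rk E+\deg E$, applied to $E=u^*T_Q$ with $\rk E=\dim Q$.

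To construct the Kuranishi model, we first reduce to a neighborhood. The image $u(\Pp^1)$ is compact, so only a relatively compact neighborhood $Q'\Subset Q$ of it matters, and maps near $u$ in the compact-open topology land in $Q'$. Hence neither compactness nor algebraicity of $Q$ is needed.

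Next, we use the graph to realize $\Hom(\Pp^1,Q)$ near $[u]$ as an open subspace of the Douady space of $\Pp^1\times Q'$ at the graph $\Gamma_u$. A subspace near a graph is again a graph, since the projection to $\Pp^1$ remains an isomorphism under small flat deformation. The normal bundle of $\Gamma_u\simeq\Pp^1$ in $\Pp^1\times Q$ is $u^*T_Q$. Douady's local theory, or equivalently Kuranishi's method applied to the deformation of the $\bar\partial$-equation for sections of $u^*T_Q$ via an exponential map for a hermitian metric near $u(\Pp^1)$, then gives the germ as $\kappa^{-1}(0)$. Concretely, we parametrize nearby smooth maps by sections $\xi$ of $u^*T_Q$ via $v=\exp_u\xi$. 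The holomorphy condition becomes a nonlinear elliptic equation $\bar\partial\xi+N(\xi)=0$ with linearization $\bar\partial$ on $u^*T_Q$. The implicit function theorem in Sobolev spaces, applied after projecting off the cokernel $H^1$, reduces the problem to the finite-dimensional equation $\kappa$ on $\ker\bar\partial=H^0$.

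The main obstacle is precisely this analytic Kuranishi reduction: verifying the Fredholm setup and the holomorphic dependence of $\kappa$, so that the reduced germ is a genuine analytic germ cut out by $h^1$ equations. I would rely on the Douady space, citing Douady's theorem that the germ at a point with normal bundle $N$ embeds in $H^0(N)$ as the zero set of $h^1(N)$ functions, together with Krull's theorem, rather than redoing the analysis.
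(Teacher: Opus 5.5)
Your proof is correct, and its outer frame matches the paper's. You realize $\Hom(\Pp^1,Q)$ near $[u]$ as an open graph locus in a Douady space, with tangent space $H^0(E)$ and obstructions in $H^1(E)$ for $E=u^*T_Q$. You then need the germ to be cut out by at most $h^1(E)$ equations in a smooth germ of dimension $h^0(E)$, after which Krull's height theorem and Riemann--Roch finish.

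You differ from the paper in that middle step, which is really the whole content of the lemma. You import it as a holomorphic Kuranishi model $\kappa\colon(\C^{h^0},0)\to(\C^{h^1},0)$, whereas the paper deliberately avoids any Kuranishi map. It uses only that $H^1(E)$ is a complete obstruction space over Artinian bases, checked by \v{C}ech gluing of local lifts. It writes the completed local ring as $\widehat R=S/I$ with $S=\C[[x_1,\dots,x_a]]$, $a=h^0(E)$ and $I\subset\mathfrak m^2$, and then proves $\dim I/\mathfrak m I\le h^1(E)$. With $N$ chosen by Artin--Rees, the obstruction to lifting the canonical deformation from $S/(I+\mathfrak m^N)$ to $S/(\mathfrak m I+\mathfrak m^N)$ gives a linear map $(I/\mathfrak m I)^*\to H^1(E)$. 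This map is injective: a lift to the pushout along a nonzero functional would be given by substitutions $x_i\mapsto x_i+c_i\varepsilon$. These fix every element of $I\subset\mathfrak m^2$, so they cannot kill an $f\in I$ on which the functional is nonzero. Krull is then applied to $\widehat R$, using $\dim R=\dim\widehat R$.

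Your route is shorter and produces an actual local model of the germ, but all of its content lies in the cited holomorphic theorem. The paper's route needs only the Douady space, with its universal property on Artinian bases, and the obstruction theory. Since the minimal number of generators of the defining ideal is unchanged by completion, the paper's argument even recovers the weak form of your model, with no convergence question. That weak form says the analytic germ is cut out by at most $h^1(E)$ holomorphic equations in $(\C^{h^0},0)$.

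One caution about your alternative PDE sketch. The exponential map of a hermitian metric is not holomorphic, so the finite-dimensional map produced by the Sobolev implicit function theorem is a priori only smooth. The zero locus of a smooth map admits no dimension lower bound at all. You are right that holomorphic dependence is the crux. It has to come from the holomorphic Douady-type model, or from a formal argument like the paper's, not from that construction as written.
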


\begin{proof}
Put $E=u^*T_Q$. Since the source is compact, the analytic space
$H=\Hom(\Pp^1,Q)$ exists as an open graph locus in a Douady space;
see \cite[Notation~2.2]{KKL10}. Its tangent space at $[u]$ is $H^0(E)$.
The vector space $H^1(E)$ is a complete obstruction space for fixed-source,
fixed-target deformations. Indeed, for a small extension of local
Artinian $\C$-algebras $B'\to B$ with kernel $J$ annihilated by the
maximal ideal of $B'$, a deformation over $B$ lifts locally on the
source because $Q$ is smooth. Differences of local lifts form a
\v{C}ech cocycle with values in $E\otimes J$. Its class in
$H^1(E)\otimes J$ vanishes precisely when the local lifts can be
modified to glue. This construction is compatible with quotients of
$J$; compare the local-lifting argument in \cite[Section~2.E]{KKL10}.

For completeness, the relation between this obstruction space and
actual local dimension can be seen formally, without assuming the
convergence of a chosen Kuranishi obstruction map. Let
$R=\OO_{H,[u]}$, put $a=h^0(E)$, and take a minimal presentation
\[
 \widehat R=S/I,\qquad
 S=\C[[x_1,\ldots,x_a]],\qquad I\subset\mathfrak m^2,
\]
where $\mathfrak m=(x_1,\ldots,x_a)$. We claim that the number of
minimal generators of $I$ is at most $h^1(E)$. By Artin--Rees, choose
$N$ sufficiently large that $I\cap\mathfrak m^N\subset\mathfrak m I$.
The canonical deformation over
$B=S/(I+\mathfrak m^N)$ has an obstruction to lifting to
\[
 B'=S/(\mathfrak m I+\mathfrak m^N),\qquad
 J=\ker(B'\to B)\simeq I/\mathfrak m I.
\]
This obstruction determines a linear map $J^*\to H^1(E)$.
It is injective. Otherwise, a nonzero $\lambda\in J^*$ in its
kernel would, by completeness of the obstruction space, give a lift
of $\widehat R\to B$ to the pushout extension
$B'_{\lambda}=B'/\ker\lambda$. Its one-dimensional kernel is generated
by an element $\varepsilon$ annihilated by the maximal ideal.
Any such lift sends each $x_i$ to $x_i+c_i\varepsilon$.
Since $I\subset\mathfrak m^2$, this substitution does not change the
image of any $f\in I$ in $B'_{\lambda}$. A relation with
$\lambda([f])\ne0$ therefore remains nonzero, a contradiction.
Thus $\dim I/\mathfrak m I\le h^1(E)$.

Nakayama's lemma and the height bound for an ideal generated by this
many elements give
\[
 \dim_{[u]}H=\dim R=\dim\widehat R
 \ge a-h^1(E)=h^0(E)-h^1(E).
\]
Riemann--Roch for the vector bundle $E$ on $\Pp^1$ gives
$\chi(E)=\rk E+\deg E$, proving the stated equality. This estimates
the dimension of the analytic germ itself, not merely its tangent
space; passing to the reduced germ leaves its dimension unchanged.
All constructions take place near the compact graph of $u$, and no
compactness of $Q$ or vanishing of $H^1(E)$ is required.
\end{proof}

 View the normalization \eqref{eq:nu} as a map into $X$. Since $M$ is
a smooth fiber, its normal bundle in $X$ is trivial and adjunction gives
\begin{equation}\label{eq:adjunction}
 K_X|_M\simeq K_M.
\end{equation}
One may also obtain this from $K_\Delta\simeq\OO_\Delta$ and the
relative canonical bundle. Consequently
\[
 \deg\nu^*T_X=-K_X\cdot C=-K_M\cdot C=\ell.
\]
Lemma~\ref{lem:kuranishi} gives
\begin{equation}\label{eq:total-lower}
 \dim_{[\nu]}\Hom(\Pp^1,X)\ge n+1+\ell.
\end{equation}

\begin{lemma}\label{lem:confinement}
Assume $K_{X_{t_*}}$ is nef. Every sufficiently small deformation of
$\nu$ as a holomorphic map into $X$ has image in $M$, is contracted
by $g$, and has degree $\ell$ with respect to $L=-K_M$. It is
birational onto its image. These assertions hold on the entire reduced
local map germ, not just on a selected smooth family of maps.
\end{lemma}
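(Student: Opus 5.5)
We apply Lemma~\ref{lem:barlet-confinement} with $A=K_{X/\Delta}$, which restricts to $K_{X_t}$ on each fiber. By hypothesis $A|_{X_{t_*}}=K_{X_{t_*}}$ is nef. By \eqref{eq:adjunction} and \eqref{eq:degree-area}, $\deg\nu^*A=K_M\cdot C=-\ell<0$, and $\nu$ maps into the fiber $X_0=M$ with $t_*\ne0$. Lemma~\ref{lem:barlet-confinement} then shows that on a reduced representative $H$ of the germ of $\Hom(\Pp^1,X)_{\red}$ at $[\nu]$, every map has image in $M$ and the universal map $H\times\Pp^1\to X$ factors holomorphically through $M$. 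This gives the first assertion on the whole reduced germ, because the lemma is stated for the reduced local analytic space rather than a chosen smooth slice.

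For the degree, we use that $\deg v^*K_{X/\Delta}$ is a locally constant integer on the map space: it is the integral of a smooth Chern form over the pushforward cycle, as in Lemma~\ref{lem:periods}. Since the maps lie in $M$, this says $\deg v^*L=\ell$ for all $v\in H$, after shrinking $H$ to be connected.

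For contraction by $g$, the key step is that $g\circ v$ is constant for every $v\in H$. We consider the degree of $v^*\alpha_0$, where $\alpha_0=g^*\beta$. This is the integral over $v_*[\Pp^1]$ of the closed form $g^*b$, and it is locally constant on the connected family by the same period argument applied to the factored universal map into $M$. It equals $\int_{\Pp^1}\nu^*g^*b=0$ at $\nu$. On the other hand, if $g\circ v$ were nonconstant, its image would be a curve and the pullback of a K\"ahler form would have strictly positive integral, as in the proof of Lemma~\ref{lem:nullcurve}. Hence every $v$ is contracted. The alternative route, that $g\circ v(\Pp^1)$ is a compact curve in $Y$ deforming from a point, is less clean, so we use the period argument.

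Once $v$ maps to $M$, is $g$-contracted, and has $\deg v^*L=\ell$, Lemma~\ref{lem:unsplit} gives birationality onto its image. In particular, the reduced germ of $\Hom(\Pp^1,X)$ at $[\nu]$ coincides set-theoretically, and as a reduced space through the factorization, with the germ of $\mathcal H$ at $[\nu]$.

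The main obstacle is the first step: making sure confinement holds on the full reduced germ and not only pointwise. This is exactly the content of the factorization in Lemma~\ref{lem:barlet-confinement}, via Corollary~\ref{cor:vertical-germ}. The remaining steps are period arguments that need the universal family to be a genuine holomorphic family into $M$, and that factorization provides it.
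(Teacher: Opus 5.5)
Your proposal is correct and follows the paper's proof essentially step by step. You apply Lemma~\ref{lem:barlet-confinement} to $A=K_{X/\Delta}$, use the vanishing $\alpha_0=g^*\beta$-period to force $g\circ v$ constant, and use minimality of $\ell$ (Lemma~\ref{lem:unsplit}) for birationality, differing from the paper only in justifying local constancy of the $L$- and $\alpha_0$-degrees via cycle periods (Lemma~\ref{lem:periods}) rather than via an explicit geodesic homotopy of nearby maps to $\nu$ in $M$.
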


\begin{proof}
Apply Lemma~\ref{lem:barlet-confinement} to the line bundle
$A=K_{X/\Delta}$ and to $\nu$. Its degree is
\[
 \deg\nu^*A=K_M\cdot C=-\ell<0,
\]
and $A|_{X_{t_*}}=K_{X_{t_*}}$ is nef. The entire reduced local germ
therefore factors through $M$. This is exactly where the additional
Barlet argument replaces nefness on all noncentral fibers.

The nearby maps, now regarded as maps to $M$, are homotopic to $\nu$
in $M$. For example, after choosing a Riemannian metric on the compact
manifold $M$, sufficiently uniformly close maps are joined pointwise
by the short geodesics in a fixed injectivity-radius neighborhood.
Thus for every sufficiently close map $u$,
\[
 \int_{\Pp^1}u^*\alpha_0=0,
 \qquad \deg u^*L=\ell.
\]
Since $\alpha_0=g^*\beta$ and $\beta$ is K\"ahler, a nonconstant
map $g\circ u$ would have strictly positive $\beta$-area. Hence
$g\circ u$ is constant. The map $u$ is nonconstant because its
$L$-degree is positive. Its image $C'$ is rational: a nonconstant
map from $\Pp^1$ factors through the normalization of $C'$, whose
genus is zero by Riemann--Hurwitz. If its generic degree is $m$, then
\[
 \ell=\deg u^*L=m(L\cdot C')\ge m\ell.
\]
Minimality of $\ell$ therefore gives $m=1$. Every nearby map defines
a point of the moduli space $\Mspace$.
\end{proof}

\begin{proposition}
\label{prop:large-family}
There is an irreducible component $V\subset\Mspace$ for which
\begin{equation}\label{eq:large-family}
 \dim V\ge n+\ell-2.
\end{equation}
\end{proposition}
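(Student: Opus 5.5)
We argue by contradiction, assuming $K_{X_{t_*}}$ is nef as in the standing setup, and combine the lower bound \eqref{eq:total-lower} with the confinement Lemma~\ref{lem:confinement}. Let $H$ denote the reduced local germ of $\Hom(\Pp^1,X)$ at $[\nu]$. By \eqref{eq:total-lower}, every irreducible component of $H$ through $[\nu]$ has dimension at least $n+1+\ell$; we fix one such component $H_1$ of maximal dimension. Passing to the reduced germ does not change the dimension, as noted at the end of the proof of Lemma~\ref{lem:kuranishi}. It is essential that this count is made in $X$, not in $M$: the deformation-theoretic bound for maps into $M$ alone would only give $n+\ell$.

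Next we apply Lemma~\ref{lem:confinement}. On a sufficiently small representative of $H_1$, the universal map factors holomorphically through $M$. Every member is $g$-contracted, has $L$-degree $\ell$, and is birational onto its image. Thus $H_1$ is, set-theoretically and as a reduced space, a locally closed analytic subspace of $\mathcal H\subset\Hom(\Pp^1,M)$ passing through $[\nu]$. We then have $\dim_{[\nu]}\mathcal H\ge n+1+\ell$. The point to check is that the factorization through $M$ is holomorphic on the reduced germ, which is exactly what Lemma~\ref{lem:barlet-confinement} provides. Because of this, the inclusion $H_1\to\Hom(\Pp^1,M)_{\red}$ is a holomorphic injection of germs, and dimension can only be preserved.

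Finally, we quotient by reparametrizations. By Proposition~\ref{prop:minimal-moduli}, $\PGL_2(\C)$ acts freely and properly on $\mathcal H$, and the orbit map $\mathcal H\to\Mspace$ has fibers that are orbits of dimension exactly $3$. By Lemma~\ref{lem:unsplit}, stabilizers are trivial. The fiber-dimension inequality at $[\nu]$ then gives
\[
 \dim_{[\nu]}\mathcal H\le \dim_{[C]}\Mspace+3 .
\]
Choosing an irreducible component $V$ of $\Mspace$ through the point $[C]$ that realizes the local dimension, we obtain
\[
 \dim V\ge n+1+\ell-3=n+\ell-2 ,
\]
which is \eqref{eq:large-family}.

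The main obstacle is the middle step. We must ensure that the \emph{full} local dimension of maps into $X$ survives confinement, that is, that the confined germ is literally the same analytic germ, not merely a subset of maps whose images happen to lie in $M$. If the appendix only gives that $\Mspace$ is a quotient of $\mathcal H$ with a universal family, a little care is needed to identify local dimensions along orbits. To handle this, we would use the local slice furnished by freeness and properness of the action: $\mathcal H$ is locally a product of the slice and $\PGL_2(\C)$ near $[\nu]$, which makes the subtraction of $3$ exact.
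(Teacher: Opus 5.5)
Your proposal is correct and follows essentially the paper's argument. You take an irreducible component of the reduced germ of $\Hom(\Pp^1,X)$ at $[\nu]$ of dimension at least $n+1+\ell$, use Lemma~\ref{lem:confinement} to push it holomorphically into the $g$-contracted, degree-$\ell$, birational maps into $M$, and subtract the $3$-dimensional $\PGL_2(\C)$-orbits before passing to a global component $V$.

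The differences are cosmetic. You route through $\dim_{[\nu]}\mathcal H$ and the principal-bundle structure of Proposition~\ref{prop:minimal-moduli}, which makes the subtraction of $3$ exact; the paper maps the germ directly to $\Mspace$ and only needs fibers of dimension at most $3$. Your claim that \emph{every} component has dimension at least $n+1+\ell$ is more than \eqref{eq:total-lower} states, but this is harmless because you choose a component of maximal dimension.
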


\begin{proof}
Choose an irreducible component of the reduced local germ
of $\Hom(\Pp^1,X)$ at $[\nu]$, and let $H$ be a sufficiently
small representative having dimension at least
$n+1+\ell$, as supplied by \eqref{eq:total-lower}. Shrink it around
$[\nu]$. Lemma~\ref{lem:confinement} gives a holomorphic map
\[
 H\longrightarrow\Mspace
\]
by forgetting the parametrization. To be precise about the analytic
factorization, on a reduced parameter germ the function
$f\circ u$ vanishes identically because it vanishes at all its
points. Thus the universal map factors through $M$; its composite
with $g$ is constant on each source fiber and gives the required
holomorphic parameter in $Y$.

Two parametrized maps in $H$ represent the same point of $\Mspace$
exactly when they differ by an automorphism of $\Pp^1$. Every map
here is birational onto its image and has trivial stabilizer. The
fibers in the full parametrized-map space are $\PGL_2(\C)$-orbits
of dimension $3$. Their intersections with $H$ therefore have
dimension at most $3$, which is the upper bound needed here.

After shrinking the irreducible germ $H$, its image is contained in
one irreducible component $V$ of $\Mspace$: locally the target has
only finitely many irreducible components, and their inverse images
are closed analytic subsets covering the irreducible germ. The
fiber-dimension inequality for the holomorphic map $H\to V$, whose
fibers have dimension at most $3$, gives
\[
 \dim V\ge\dim H-3\ge n+\ell-2.
\]
This reasoning does not require the image of the nonproper map
$H\to V$ to be a closed analytic subset. The chosen component $V$ is
itself compact because $\Mspace$ is compact.
\end{proof}

\section{Proof of the main theorem}\label{sec:proof}

\begin{proof}[Proof of Theorem~\ref{thm:main}]
Assume a fiber with non-nef canonical bundle exists. After an
automorphism of $\Delta$, write it as $M=X_0$, and keep the parameter
$t_*\ne0$ of the known nef fiber. Construct $\tau>0$ and
$\alpha_0=c_1(K_M)+\tau[\omega_0]$ as in
Lemma~\ref{lem:threshold}. The class $\alpha_0$ is nef but not
K\"ahler. Proposition~\ref{prop:HL}, using only the fiber $X_{t_*}$,
shows that it satisfies all hard Lefschetz isomorphisms.

By Theorem~\ref{ext:HX}, there is a projective contraction
$g\colon M\to Y$ with $\alpha_0=g^*\beta$ for a K\"ahler class
$\beta$ on $Y$. It is nontrivial. Proposition~\ref{prop:semismall}
gives, for every irreducible compact analytic $Z\subset M$,
\[
 2\dim Z-\dim g(Z)\le n.
\]

Lemmas~\ref{lem:nullcurve} and \ref{lem:minimal} provide the
$g$-ample line bundle $L=-K_M$ and a minimal contracted rational
curve of degree $\ell\in\Z_{>0}$. The moduli space $\Mspace$ of
degree-$\ell$ maps contracted by $g$ is compact and consists only of
birational maps from smooth $\Pp^1$'s, by Lemma~\ref{lem:unsplit} and
Appendix~\ref{app:moduli}.

Propositions~\ref{prop:barlet-proper} and
\ref{prop:negative-vertical}, followed by
Lemma~\ref{lem:barlet-confinement}, confine the reduced total-space
map germ of a minimal contracted curve to $M$. Accordingly,
Proposition~\ref{prop:large-family} gives an irreducible component
$V\subset\Mspace$ with $r=\dim V\ge n+\ell-2$. Put
$Z=\Locus(V)$, $d=\dim Z$, and $e=\dim g(Z)$. The incidence
inequality of Proposition~\ref{prop:incidence} now yields
\begin{equation}\label{eq:final}
 n+\ell\ \le\ r+2\ \le\ 2d-e\ \le\ n.
\end{equation}
Since $\ell>0$, this is impossible. The assumed non-nef fiber cannot
exist. Hence $K_{X_t}$ is nef for every $t\in\Delta$.
\end{proof}

\appendix
\section{The compact moduli space of minimal rational curves}
\label{app:moduli}

We justify here the quotient used in Section~\ref{sec:contraction}.
The point that requires care is that
$\PGL_2(\C)$ is noncompact and the map from parametrized maps to their
image cycles is not proper.  Compactness is obtained instead from
Gromov compactness together with the minimality of the anticanonical
degree.  Minimality eliminates every possible bubble in a stable
limit.

Keep the notation of Section~\ref{sec:contraction}.  Thus
$M=X_0$ is compact K\"ahler,
\[
 g\colon M\longrightarrow Y,\qquad
 \alpha_0=c_1(K_M)+\tau[\omega_0]=g^*\beta,
\]
$L=-K_M$ is $g$-ample, and
\[
 \ell=\min\{L\cdot C:C\subset M\text{ rational and }g(C)
 \text{ is a point}\}>0.
\]
Let
\[
 \mathcal H=\{u\in\Hom(\Pp^1,M):\deg u^*L=\ell,
 \ g\circ u\text{ is constant}\}_{\red}.
\]
We let $G=\PGL_2(\C)$ act on $\mathcal H$ by
\begin{equation}\label{eq:G-action}
 \sigma\cdot u=u\circ\sigma^{-1}.
\end{equation}

\begin{proposition}\label{prop:minimal-moduli}
With the notation above, the following statements hold.
\begin{enumerate}
 \item[(a)] $\mathcal H$ is a reduced complex analytic space and the
 action \eqref{eq:G-action} is holomorphic, free, and proper.
 Consequently the orbit space
 \[
  \Mspace:=\mathcal H/G
 \]
 carries a unique complex-space structure for which
 $\pi\colon\mathcal H\to\Mspace$ is a holomorphic principal
 $G$-bundle.
 \item[(b)] The complex space $\Mspace$ is compact.  There are a
 holomorphic map $a\colon\Mspace\to Y$, a holomorphically locally
 trivial $\Pp^1$-bundle
 \[
  p\colon U\longrightarrow\Mspace,
 \]
 and a holomorphic evaluation map $q\colon U\to M$ satisfying
 $g\circ q=a\circ p$.  For every $v\in\Mspace$, the map
 $q|_{p^{-1}(v)}$ is the normalization of the rational curve
 represented by $v$.
 \item[(c)] Every fiber $\Mspace_y=a^{-1}(y)$ is a projective complex
 space.
\end{enumerate}
\end{proposition}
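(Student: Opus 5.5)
I will prove the three parts in order, using the Douady-space description of $\Hom(\Pp^1,M)$ as an open subset of the Douady space of $\Pp^1\times M$ via graphs.

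\textbf{Part (a): structure of $\mathcal H$ and the action.} The condition $\deg u^*L=\ell$ is open and closed. The condition that $g\circ u$ be constant is closed and analytic: it is equivalent to $\int u^*\alpha_0=0$, which is locally constant, so it is even open and closed. Hence $\mathcal H$ is a union of connected components of $\Hom(\Pp^1,M)$, taken with reduced structure, and is a reduced complex space. The action map $G\times\mathcal H\to\mathcal H$ is holomorphic because composition of maps is holomorphic on Douady spaces of graphs. Freeness is Lemma~\ref{lem:unsplit}.

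For properness, take $u_j\to u$ and $\sigma_j\cdot u_j\to u'$ in $\mathcal H$, and show that $\sigma_j$ has a convergent subsequence in $G$. I would argue as follows. Fix three points $z_1,z_2,z_3\in\Pp^1$ at which $u$ is injective and immersive; this is possible since $u$ is birational onto its image. If $\sigma_j$ diverged in $G$, then after passing to a subsequence, $\sigma_j^{-1}$ converges uniformly on compacta of $\Pp^1\setminus\{p\}$ to a constant $c$. Then $u'=\lim u_j\circ\sigma_j^{-1}$ would be constant on an open set, hence constant, contradicting $\deg u'^*L=\ell>0$.

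A free proper holomorphic action of a complex Lie group on a reduced complex space admits local holomorphic slices. Concretely, at $u$ take the slice $\{v: v(z_i)\in H_i,\ i=1,2,3\}$, where the $H_i$ are local hypersurfaces transverse to $du(T_{z_i}\Pp^1)$. Nearby maps meet each $H_i$ near $z_i$ in exactly one point, by the implicit function theorem. Normalizing these three points to $z_1,z_2,z_3$ gives a holomorphic retraction onto the slice, so $\mathcal H\to\mathcal H/G$ is a principal $G$-bundle with a complex structure on the quotient. Hausdorffness of the quotient follows from properness.

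\textbf{Part (b): compactness and the universal family.} This is the main obstacle. Take a sequence $[u_j]$ in $\Mspace$. All the $u_j$ lie in the compact fibers of $g$ and have energy $\int u_j^*\omega_0=\ell/\tau$ by \eqref{eq:degree-area}. Gromov compactness \cite{MS12} gives, after reparametrization $\sigma_j$, convergence to a stable map whose components are rational curves in $M$, each contracted by $g$, since $\alpha_0$-areas add to zero and are nonnegative. The $L$-degrees of these components are positive, since $\omega_0$-areas are positive, and they sum to $\ell$. Minimality of $\ell$ forces a single nonconstant component with no bubbles. So $u_j\circ\sigma_j$ converges in $C^\infty$ to some $u\in\mathcal H$, and the degree is preserved.

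Here I would check carefully that Gromov convergence without bubbling implies convergence in the Douady topology. I would argue this via uniform convergence of graphs together with the openness of $\mathcal H$.

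The universal family is then $U=\mathcal H\times_G\Pp^1$, using the diagonal action, with $q[u,z]=u(z)$. This bundle is locally trivial via the slices. The map $a$ is induced by the $G$-invariant map $u\mapsto g(u(z_0))$. The normalization statement follows from Lemma~\ref{lem:unsplit}.

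\textbf{Part (c): projectivity of fibers.} Let $F=g^{-1}(y)$, which is projective with $L|_F$ ample by Lemma~\ref{lem:nullcurve}. The fiber $\Mspace_y$ is identified with an open-and-closed part of the algebraic space of maps $\Hom(\Pp^1,F)$ of $L$-degree $\ell$, modulo $G$. By GAGA this is the analytification of a quasi-projective scheme of morphisms. Its $G$-quotient is the corresponding locus in the Chow variety of $F$: since the maps are birational onto their images, $v\mapsto u_*[\Pp^1]$ is injective. This locus is compact by part (b), so it is a closed subvariety of the Chow variety, hence projective. Finally, the map to the Chow locus is a bijective holomorphic map that is locally an isomorphism, because the reduced structures agree through the normalization of the universal curve. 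Therefore $\Mspace_y$ is projective.
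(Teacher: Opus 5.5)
\textbf{The gap is at the end of part (c).} You claim that the bijective holomorphic map $\Mspace_y\to\mathcal R_y$ onto the Chow locus is a local isomorphism ``because the reduced structures agree through the normalization of the universal curve.'' A bijective holomorphic map onto a possibly non-normal target need not be an isomorphism, and the target here can genuinely be non-normal. For rational plane cubics, birational parametrizations modulo $G$ form a smooth space of dimension $8$. Their image in $|\OO_{\Pp^2}(3)|$ is the discriminant hypersurface, which has a cuspidal singularity along the cuspidal cubics, so there the cycle map is only a normalization. Normalizing the universal curve yields a family of $\Pp^1$'s, and hence an inverse map, in general only over a normal base, so your justification fails.

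What your argument does establish is that $\overline\Phi_y\colon\Mspace_y\to\mathcal R_y$ is proper (the source is compact) with finite fibers. It is therefore a finite map onto a projective variety. You still need a separate step showing that a compact complex space finite over a projective variety is projective. The paper does this by GAGA: the coherent $\OO_{\mathcal R_y}$-algebra $(\overline\Phi_y)_*\OO_{\Mspace_y}$ is algebraic, its relative Spec is finite and hence projective over $\mathcal R_y$, and its analytification is $\Mspace_y$. With this replacement, your detour through the algebraic Hom scheme and its $G$-quotient is unnecessary. You should also justify that the cycle map is holomorphic (e.g.\ via the fundamental-cycle construction) before applying Remmert and Chow.

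Parts (a) and (b) are sound, and in places more self-contained than the paper:
\begin{enumerate}
\item[(i)] Local constancy of $\int u^*\alpha_0$ shows directly that $\mathcal H$ is open and closed in $\Hom(\Pp^1,M)$.
\item[(ii)] Your properness argument, via the locally uniform collapse of divergent M\"obius sequences, needs neither the cycle map nor birationality. The paper instead identifies the limit $\sigma$ through equal image cycles and three smooth points.
\item[(iii)] Your transversal-hypersurface slices replace the paper's citation of a general quotient theorem. You should add that properness and freeness force any $\sigma$ with $\sigma\cdot s=s'$, for $s,s'$ in the slice near $u$, to be close to the identity, where your implicit-function uniqueness applies.
\end{enumerate}
In (b), minimality of $\ell$ only excludes a second \emph{nonconstant} component. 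Constant components of the stable limit have $L$-degree $0$, so you must also invoke the stability condition (each ghost has at least three nodes) and count the edges of the tree, as the paper does.
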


\begin{proof}
\emph{Step 1: analytic structure and freeness.}
The degree $\deg u^*L$ is locally constant on
$\Hom(\Pp^1,M)$.  On the positive-degree locus, the fundamental-cycle
map
\[
 \Phi\colon\Hom(\Pp^1,M)\longrightarrow\Cyc_1(M),\qquad
 u\longmapsto u_*[\Pp^1],
\]
is holomorphic; this is the same graph-to-cycle construction used in
\eqref{eq:map-cycle}.  The condition that $g\circ u$ be constant is
therefore the inverse image of the relative cycle space
$\Cyc_1(M/Y)$.  Hence $\mathcal H$ is a reduced complex analytic
space.

For $u\in\mathcal H$, let $C_u$ be its reduced image and let $m$ be
the generic mapping degree.  Since $C_u$ is a contracted rational
curve, minimality of $\ell$ gives
\[
 \ell=\deg u^*L=m(L\cdot C_u)\ge m\ell.
\]
Thus $m=1$.  Hence every $u\in\mathcal H$ is the normalization of its
image and has trivial stabilizer in $G$.  The action is free.

\emph{Step 2: properness of the reparametrization action.}
Consider
\[
 \Theta\colon G\times\mathcal H\longrightarrow
 \mathcal H\times\mathcal H,
 \qquad (\sigma,u)\longmapsto(u,\sigma\cdot u).
\]
We prove that $\Theta$ is proper.  Since complex spaces are metrizable
under the usual countability convention, it is enough to use the
sequential criterion.  Suppose
\[
 u_j\longrightarrow u,
 \qquad
 v_j:=\sigma_j\cdot u_j
       =u_j\circ\sigma_j^{-1}\longrightarrow v
\]
in $\mathcal H$.  For every $j$, $u_j$ and $v_j$ have the same image
cycle.  Continuity of the cycle map gives
\[
 u_*[\Pp^1]=v_*[\Pp^1].
\]
Both $u$ and $v$ are birational normalizations of this same reduced
curve.  Hence there is a unique $\sigma\in G$ such that
$v=\sigma\cdot u$, equivalently $u=v\circ\sigma$.

Choose three distinct points $z_1,z_2,z_3\in\Pp^1$ whose images
$u(z_i)$ are distinct smooth points of the image curve.  Since
\[
 v_j(\sigma_j z_i)=u_j(z_i),
\]
any cluster point $w$ of the sequence $\sigma_j z_i$ satisfies
$v(w)=u(z_i)$.  A normalization has exactly one preimage over a smooth
point, so necessarily $w=\sigma z_i$.  Thus
\[
 \sigma_j z_i\longrightarrow\sigma z_i,
 \qquad i=1,2,3.
\]
The three limiting points are distinct.  A M\"obius transformation is
uniquely determined by the images of three distinct points, and the
corresponding three-point coordinate on $\PGL_2(\C)$ is continuous.
Consequently $\sigma_j\to\sigma$ in $G$.  This proves properness of
$\Theta$.

A free proper holomorphic action of a complex Lie group on a complex
space has a complex analytic geometric quotient; see
\cite[Lemma~3.2.6]{HH99}.
Therefore $\Mspace=\mathcal H/G$ is a complex space and
$\pi\colon\mathcal H\to\Mspace$ is a holomorphic principal
$G$-bundle.

\emph{Step 3: compactness of the quotient.}
For every $u\in\mathcal H$, the curve $u(\Pp^1)$ is contracted by
$g$.  Since
$\alpha_0=c_1(K_M)+\tau[\omega_0]=g^*\beta$ and
$L=-K_M$, we have
\begin{equation}\label{eq:minimal-energy}
 0=\int_{\Pp^1}u^*\alpha_0
  =-\ell+\tau\int_{\Pp^1}u^*\omega_0,
 \qquad
 \int_{\Pp^1}u^*\omega_0=\frac{\ell}{\tau}.
\end{equation}
Thus all maps in $\mathcal H$ have the same $\omega_0$-energy.

Take an arbitrary sequence $[u_j]\in\Mspace$ and choose
representatives $u_j\in\mathcal H$.  Gromov compactness for
holomorphic spheres in the compact K\"ahler manifold
$(M,\omega_0)$ gives, after passing to a subsequence, a genus-zero
stable holomorphic limit of the $u_j$; see
\cite[Chapters~4--5]{MS12}.  The total $L$-degree of the stable limit
is $\ell$.  Moreover, because every $g\circ u_j$ is constant, every
component of the stable limit is mapped by $g$ to one and the same
point of $Y$.

Let $u_\lambda\colon C_\lambda\simeq\Pp^1\to M$ be a nonconstant
component of the stable limit.  Its image is a contracted rational
curve.  If $m_\lambda$ is its generic mapping degree, then
\[
 \deg u_\lambda^*L
 =m_\lambda\bigl(L\cdot u_\lambda(C_\lambda)_{\red}\bigr)
 \ge\ell.
\]
Since the sum of the $L$-degrees of all nonconstant components is
exactly $\ell$, there is precisely one nonconstant component, its
$L$-degree is $\ell$, and it is birational onto its image.

There are no constant components either.  Indeed, if there were $m>0$
constant components, the dual graph would be a tree with $m+1$
vertices and hence $m$ edges.  Stability of an unmarked stable map
requires every constant component to have valence at least three.
The sum of the valences of the constant vertices would therefore be
at least $3m$, whereas the sum of the valences of \emph{all} vertices
is $2m$, a contradiction.  Hence the stable limit has a single smooth
domain $\Pp^1$ and belongs again to $\mathcal H$.

In the absence of bubbles, Gromov convergence is ordinary smooth
convergence after reparametrization.  Thus there are
$\gamma_j\in G$ and $u_\infty\in\mathcal H$ such that
\[
 \gamma_j\cdot u_j\longrightarrow u_\infty
 \quad\text{in }\Hom(\Pp^1,M).
\]
Therefore $[u_j]\to[u_\infty]$ in $\Mspace$.  We have proved that
$\Mspace$ is sequentially compact.  Since it is a second-countable
Hausdorff complex space, it is metrizable; hence it is compact.

\emph{Step 4: the universal family.}
The holomorphic map
\[
 a_{\mathcal H}\colon\mathcal H\longrightarrow Y,
 \qquad a_{\mathcal H}(u)=g(u(z_0))
\]
is independent of the chosen $z_0\in\Pp^1$ and is $G$-invariant.
It descends to a holomorphic map $a\colon\Mspace\to Y$.

Let $G$ act diagonally on $\Pp^1\times\mathcal H$ by
\begin{equation}\label{eq:diagonal-action}
 \sigma\cdot(z,u)=(\sigma z,\sigma\cdot u)
                 =(\sigma z,u\circ\sigma^{-1}).
\end{equation}
Since $\mathcal H\to\Mspace$ is a principal $G$-bundle, the quotient
\[
 U=(\Pp^1\times\mathcal H)/G
\]
is the associated $\Pp^1$-bundle over $\Mspace$ and is therefore
holomorphically locally trivial.  The evaluation map
\[
 \operatorname{ev}\colon\Pp^1\times\mathcal H\to M,
 \qquad (z,u)\longmapsto u(z),
\]
is invariant under \eqref{eq:diagonal-action}, because
\[
 (u\circ\sigma^{-1})(\sigma z)=u(z).
\]
It descends to $q\colon U\to M$.  By construction
$g\circ q=a\circ p$, and on every fiber of $p$ the map $q$ is the
normalization of the corresponding rational curve.

\emph{Step 5: projectivity of the fibers.}
The cycle map $\Phi$ is $G$-invariant and descends to a holomorphic
map
\[
 \overline\Phi\colon\Mspace\longrightarrow\Cyc_1(M/Y).
\]
It is injective: two birational maps from $\Pp^1$ with the same image
cycle are two normalizations of the same curve and hence differ by a
unique element of $G$.  Since $\Mspace$ is compact,
$\overline\Phi$ is proper; being proper with finite fibers, it is a
finite holomorphic map onto its image.

Fix $y\in Y$.  By Remmert's theorem the image
\[
 \mathcal R_y:=\overline\Phi(\Mspace_y)
\]
is a closed analytic subspace of the degree-$\ell$ cycle space of the
projective fiber $M_y$.  Because $L|_{M_y}$ is ample, this degree
piece is the analytification of a projective Chow variety; hence
$\mathcal R_y$ is projective by Chow's theorem.  The finite analytic
map
\[
 \Mspace_y\longrightarrow\mathcal R_y
\]
algebraizes by GAGA: the coherent
$\mathcal O_{\mathcal R_y}$-algebra
$(\overline\Phi_y)_*\mathcal O_{\Mspace_y}$ is algebraic, and its
relative spectrum is finite, hence projective, over
$\mathcal R_y$.  Its analytification is $\Mspace_y$.  Thus
$\Mspace_y$ is projective.
\end{proof}

{\small {School of Mathematics, Hunan University, China

 {\it E-mail address}: mulin@hnu.edu.cn}

 {School of Mathematical Sciences, Dalian University of Technology,  China

 {\it E-mail address}: xlliu1124@dlut.edu.cn}

{School of Mathematics and statistics, Wuhan  University, Wuhan 430072, China

{\it E-mail address}: likeanyone@whu.edu.cn}}


\begin{thebibliography}{99}

\bibitem{AP97}
M.~Andreatta and T.~Peternell,
\emph{On the limits of manifolds with nef canonical bundles},
in Complex Analysis and Geometry (Trento, 1995),
Pitman Research Notes in Mathematics Series \textbf{366},
Longman, Harlow, 1997, 1--6.


\bibitem{Bar75}
D.~Barlet,
\emph{Espace analytique r\'eduit des cycles analytiques complexes
compacts d'un espace analytique complexe de dimension finie},
in Fonctions de plusieurs variables complexes II,
Lecture Notes in Mathematics \textbf{482}, Springer, 1975, 1--158.

\bibitem{Bar99}
D.~Barlet,
\emph{How to use the cycle space in complex geometry},
in Several Complex Variables, MSRI Publications \textbf{37},
Cambridge University Press, 1999, 25--42.


\bibitem{Bishop}
E.~Bishop,
\emph{Conditions for the analyticity of certain sets},
Michigan Mathematical Journal \textbf{11} (1964), no.~4, 289--304.

\bibitem{CP99} F.~Campana, T.~Peternell,
\emph{Recent developments in the classification theory of compact
K\"ahler manifolds},
in Several Complex Variables (Berkeley, CA, 1995--1996), 113--159,
Math.\ Sci.\ Res.\ Inst.\ Publ.\ \textbf{37},
Cambridge University Press, Cambridge, 1999.

\bibitem{F1} Fujiki, A. \emph{Closedness of the Douady spaces of compact Kahler spaces}, Publ. Res. Inst. Math. Sci. 14 (1978/79), no. 1, 1-52.




\bibitem{HH99}
P.~Heinzner and A.~Huckleberry,
\emph{Analytic Hilbert quotients},
in Several Complex Variables,
Math. Sci. Res. Inst. Publ. \textbf{37},
Cambridge University Press, 1999, 309--349.

\bibitem{MS12}
D.~McDuff and D.~Salamon,
\emph{$J$-holomorphic Curves and Symplectic Topology},
2nd ed., American Mathematical Society Colloquium Publications,
vol.~52, American Mathematical Society, Providence, RI, 2012.

\bibitem{Ser56}
J.-P.~Serre,
\emph{G\'eom\'etrie alg\'ebrique et g\'eom\'etrie analytique},
Annales de l'Institut Fourier \textbf{6} (1955--1956), 1--42.

\bibitem{Fuj23}
O.~Fujino,
\emph{Cone and contraction theorem for projective morphisms between
complex analytic spaces},
preprint, version~4, August~11, 2023.
\arxiv{2209.06382v4}.


\bibitem{GH78}
P.~Griffiths and J.~Harris,
\emph{Principles of Algebraic Geometry},
Wiley-Interscience, New York, 1978.

\bibitem{HX26}
C.~Hacon and L.~Xie,
\emph{On the K\"ahler MMP and the transcendental base-point-free
theorem},
preprint, \arxiv{2607.24986v1}, 2026.


\bibitem{KKL10}
S.~Kebekus, S.~Kousidis, and D.~Lohmann,
\emph{Deformations along subsheaves},
L'Enseignement Math\'ematique (2) \textbf{56} (2010), 287--313.

\bibitem{Kol96}
J.~Koll\'ar,
\emph{Rational Curves on Algebraic Varieties},
Ergebnisse der Mathematik und ihrer Grenzgebiete, 3.~Folge,
vol.~32, Springer-Verlag, Berlin, 1996.

\bibitem{LL24} M.-L. Li, W. Liu, \textit{The limits of K\"ahler manifolds under holomorphic deformations}, 	arXiv: 2406.14076, to appear in Acta Mathematica Sinica, English Series.


\bibitem{LRW25}
M.-L.~Li, S.~Rao, and K.~Wang,
\emph{Deformation of nef adjoint canonical line bundles},
preprint, \arxiv{2510.23967v1}, 2025.


\bibitem{Mag}
J.~I.~Magn\'usson,
\emph{Lectures on Cycle Spaces},
lecture notes based on lectures at Ruhr Universit\"at Bochum,
February 2004; Sections~10--11.

\bibitem{Wi91}
J.~A.~Wi\'sniewski,
\emph{On deformation of nef values},
Duke Mathematical Journal \textbf{64} (1991), no.~2, 325--332.

\bibitem{Wi98}
J.~A.~Wi\'sniewski,
\emph{Cohomological invariants of complex manifolds coming from
extremal rays},
preprint, \arxiv{math/9803010v1}, 1998.

\end{thebibliography}
\end{document}